\documentclass{amsart}
\usepackage[mathscr]{eucal}
\usepackage{color}
\usepackage{xypic}
\usepackage{amsfonts}   
\usepackage{amsmath}
\usepackage{amsthm}
\usepackage{amssymb}
\usepackage{latexsym}
\usepackage[english]{babel}
\usepackage[utf8]{inputenc}


%
\let\nc\newcommand

\nc{\la}{\label}

\newtheorem{theorem}{Theorem}[section]
\newtheorem{definition}[theorem]{Definition}
\newtheorem{corollary}[theorem]{Corollary}
\newtheorem{lemma}[theorem]{Lemma}
\newtheorem{proposition}[theorem]{Proposition}
\newtheorem{example}[theorem]{Example}
\newtheorem{remark}[theorem]{Remark}

\def\k{\mathsf k}

\newcommand{\into}{\,\,\hookrightarrow\,\,}

\def\k{\mathsf k}

 
  

\title{Generalizations of noncommutative Noether's problem}

\author{João Fernando Schwarz}

\address{CMCC, Universidade Federal do ABC, Avenida dos Estados, 5001, Santo Andre, Brazil}

\email{jfschwarz.0791@gmail.com}

\begin{document}
\maketitle

\begin{abstract}
    Noether's problem is a classical and very important problem in algebra. It is an intrinsecally interesting problem in invariant theory, but with far reaching applications in the sutdy of moduly spaces, PI-algebras, and the Inverse problem of Galois theory, among others. To obtain a noncommutative analogue of Noether's problem, one would need a significant skew field that shares a role similar to the field of ratioal functions. Given the importance of the Weyl fields due to Gelfand-Kirillov's Conjecture, in 2006 J. Alev and F. Dumas introduced what is nowdays called the noncommutative Noether's problem. Many papers in recent years \cite{FMO}, \cite{EFOS}, \cite{FS}, \cite{Tikaradze} have been dedicated to the subject. The aim of this article is to generalize the main result of \cite{FS} for more general versions of Noether's problem; and consider its analogue in prime characteristic.

    \medskip

\noindent {\bf Keywords: algebras of invariant differential opertors, algebras of crystalline differential operators, Weyl algebras, Noether's problem} 
\medskip

\noindent {\bf 2020 Mathematics Subject Classification:  16W22 16S32 16S85
}
\medskip
\end{abstract}

\section*{Notions of rationality}
Let $\k$ denote an arbitrary base field.

The question of rationality of fields is an important one, whose study goes back to more than a century ago. One of the central problems in this area is the Lüroth problem: let $F$ be a finite extension of $\mathsf{k}$, with $\mathsf{k} \subsetneq F \subset k(x_1, \ldots, x_n)$. Is it true that, then, $F$ is also a purely transcendental extension of $\mathsf{k}$?

There is a nice geometric interpretation of this problem. Let $X$ be a variety with $\k(X)=F$. The embedding $F \subset \k(x_1,\ldots,x_n)$ induces a dominat rational map $f: \mathbb{P}^n \rightarrow X$. Varieties with this property are called unirational, and varieties birationally equivalent with a $\mathbb{P}^m$ are called rational. So geometrically Lüroth's problem asks: is every unirational variety rational?

Lüroth problem has a positive solution when $n=1$, by Lüroth's Theorem (see, e.g., \cite{Jacobson}), and when $n=2$ and the base field is algebraically closed of characteristic 0, by Castelnuovo's rationality criterion (see, e.g., \cite{Zariski}). However, if the base field is not algebraically closed, counter-examples exists already for $n=2$ even for $\mathsf{k}=\mathbb{R}$. For instance, the field of fractions of

\[ \mathbb{R}[x,y,z]/(x^2+y^2-z(z-1)(z-2)) \]

is unirational but not a purely transcendental extension \cite{Formanek}. When $n=2$, counter-examples also were found in algebraically closed fields of prime characteristic by Zariski \cite{Zariski}. When $n \geq 3$, counter-examples exists even for algebraically closed fields of zero characteristic \cite{Formanek}. 

There is still another notion of rationality that is useful: let $F$ be a finite extension of $\mathsf{k}$. If for some indeterminates, $F(x_1,\ldots,x_r)$ is a purely transcendental extension of $\k$, we say that $F$ is stably-rational. Geometrically, a variety $X$ is stably-rational if, for some $m>0$, $X \times \mathbb{P}^m$ is rational. It should be noticed that rational $\subsetneq$ stably-rational $\subsetneq$ unirational. For this and other notions of rationality, see \cite{C-T}.

\section*{Rationality of the field of invariants and Noether's problem}

There is an important subcase of Lüroth's problem, which talks about the rationality of the ring of invariants of a purely transcendental extension. It was introduced and studied by Emmy Noether \cite{Noether0}, \cite{Noether}:

\textbf{Noether's Problem:} Let $S_n$ acts by permutation of the variables in the rational field $\k(x_1,\ldots,x_n)$. Let $G<S_n$ be a subgroup that permutes transitively the variables. When $\k(x_1, \ldots, x_n)^G$ is a purely transcendental extension of $\k$ --- with necessarily the same transcendence degree? Or, in other words, when $\k(x_1, \ldots, x_n)^G \simeq \mathsf{k}(x_1,\ldots,x_n)$?

\begin{remark}
    A more appropriate name would be Noether's Conjecture, for she believed the above question to have a positive solution for all $G$.
\end{remark}

Noether introduced this question thinking in applications to the Inverse problem in Galois theory. Consider $\mathsf{k}= \mathbb{Q}$ (or any other Hilbertinian field, see, e.g. \cite[Chapter 3]{Jensen}). If Noether's Problem has a positive solution for $G$, she showed that so has the Inverse problem of Galois theory: there is a Galois extension $L$ of $\mathbb{Q}$ such that $Gal(L, \mathbb{Q})=G.$

We now introduce an useful terminology: let $G$ be a finite group, and consider the field of rational functions $\k(x_h)_{h \in G}$, with the variables indexed by the elements of $G$. There is a natural action by permutations of $G$ on $\k(x_h)_{h \in G}$: $g \in G$ sends $x_h$ to $x_{gh}$. The field of invariants $\k(x_h)_{h \in G}^G$ is denoted $\k(G)$.

Some important cases of positive solution to Noether's Problem are:

\begin{theorem}\label{lista-1}

\begin{enumerate}
    \item $S_n$ acting on the rational function field in $n$-indeterminates.
    \item $\mathcal{A}_n$ acting on the rational function field in $n$-indeterminates, for $n=3,4,5$. The case of alternating groups remains open for $n>5$ \cite{Jensen}. 
     \item $S_n$ action on $\k(x_1, \ldots, x_n,y_1,\ldots,y_n)$, permuting $x$'s and $y$'s simultaneously \cite{Miyata}
     \item The action of the quaternion group $Q_8$ in $\k(x_1,x_2,x_3,x_4)$ \cite{Jensen}
     \item Let $G$ be a $p$-group, and $char \, \k=p$. Then $\k(G)$ is a purely transcendental extension. \cite{Hoshi}
     \item $G$ is a finite abelian group with exponent $e$ such that either $char \, \k=0$ or it is coprime with $e$; and the field possesses a primitive e-th root of unity. \cite{Jensen}
     \item Let $n$ be a positive integer. Then $\mathbb{Q}(C_n)$ if and only if $n$ divides
     \[ 2^2.3^m.5^2.7^2.11.13.17.19.23.29.31.37.41.43.61.67.71 \]
$m \geq 0$. \cite{Plans}
\end{enumerate} 
\end{theorem}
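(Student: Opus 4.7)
The plan is to address each of the seven items separately, since the statement is a compendium of classical results established with quite different techniques. In each case I will indicate the main idea rather than reproduce the full argument.

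For (1), I would invoke the fundamental theorem of symmetric functions: $\k(x_1,\ldots,x_n)^{S_n}$ is generated as a field by the elementary symmetric polynomials $e_1,\ldots,e_n$, and these are algebraically independent over $\k$, so the invariant field is itself rational of the correct transcendence degree. For (2), I would produce an explicit transcendence basis by adjoining the square root of the discriminant to a rational subfield of $\k(x_1,\ldots,x_n)^{S_n}$; the small cases $n=3,4,5$ then follow by case-by-case Galois reductions through resolvents, as expounded in \cite{Jensen}. For (3), I would use Miyata's substitution $y_i = x_i z_i$, which converts the diagonal $S_n$-action into a permutation action on the $x$'s and a twisted action on the $z$'s, and then reduce to (1) after tracking the new transcendence basis.

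For (4), (5), and (6), the methods diverge. Item (4) would be treated via an explicit generic polynomial construction arising from the faithful $4$-dimensional representation of $Q_8$, as in \cite{Jensen}. For (5), I would exploit the modular representation theory of $p$-groups in characteristic $p$: every faithful $\k G$-module admits a composition series by trivial modules, and the resulting filtration reduces the problem to a tower of rational extensions, each step being an Artin--Schreier-type extension \cite{Hoshi}. For (6), I would apply Fischer's theorem: when $\k$ contains a primitive $e$-th root of unity and $\mathrm{char}\,\k \nmid e$, the action of $G$ diagonalizes, and the invariants of a diagonal action on a Laurent polynomial ring are determined by a sublattice of the character lattice, yielding a rational generating set.

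The main obstacle, and by far the deepest point, will be (7). Even the statement requires Lenstra's criterion for the rationality of $\mathbb{Q}(C_n)$, and the numerical list appearing in the theorem is obtained through a careful analysis of cyclotomic units and local--global obstructions at each prime $p \mid n$. I would treat Plans' classification \cite{Plans} as a black box; unpacking it substantially exceeds the scope of a proof sketch and is in a different order of difficulty from items (1)--(6).
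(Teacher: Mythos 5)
The paper gives no proof of this theorem at all: it is a survey-style list, each item being attributed directly to the literature (\cite{Jensen}, \cite{Miyata}, \cite{Hoshi}, \cite{Plans}), so there is no ``paper's approach'' to compare against; the only question is whether your sketches match the standard arguments in those references. For (1), (4), (6), (7) they do: elementary symmetric functions for (1), the explicit constructions in \cite{Jensen} for (2) and (4), Fischer's diagonalization and the sublattice-of-characters argument for (6), and treating the Lenstra--Plans classification as a black box for (7) is reasonable, since that is indeed of a different order of difficulty. For (5) the underlying idea (triangularize a faithful module of a $p$-group in characteristic $p$, then induct) is the right one, but the inductive steps are applications of Miyata's one-variable theorem to a unipotent (affine) action, not ``Artin--Schreier-type extensions''; that phrase describes degree-$p$ field extensions, which is not what the rationality argument produces.

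The one concrete gap is item (3). The substitution $y_i = x_i z_i$ does not ``twist'' anything: since $\sigma(x_i)=x_{\sigma(i)}$ and $\sigma(y_i)=y_{\sigma(i)}$, the new variables satisfy $\sigma(z_i)=z_{\sigma(i)}$, so you are looking at exactly the same simultaneous permutation action on $\k(x_1,\ldots,x_n,z_1,\ldots,z_n)$ and the claimed reduction to (1) is circular. The correct reduction -- the one behind Miyata's theorem and the no-name lemma -- is to view $W=\bigoplus_i K y_i$ with $K=\k(x_1,\ldots,x_n)$ as a $K$-vector space carrying a semilinear $S_n$-action; because the action on $K$ is faithful, Galois descent (Speiser's theorem) gives a basis of $S_n$-invariant vectors, for instance $w_j=\sum_{i=1}^n x_i^{\,j-1}y_i$ for $j=1,\ldots,n$, which is a basis since the Vandermonde determinant is nonzero. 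Then $\k(x,y)^{S_n}=K^{S_n}(w_1,\ldots,w_n)=\k(e_1,\ldots,e_n,w_1,\ldots,w_n)$, which is purely transcendental, and this is the genuine reduction to item (1). With that correction your outline of (3) is fine.
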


The first counter-examples to Noether's problem are due to Swan \cite{Swan} and Vokresenskii \cite{Vokresenskii}, independently, over the rational numbers. Their counter-examples consisted of cyclic groups acting by permutation, the smallest one being $C_{47}$.

Later Lenstra \cite{Lenstra} classified all finite abelian groups of permutations for which Noether's problem has a positive solution; he found that the smallest group that can give a counter-example is $C_8$. Finally, Saltman \cite{Saltman} obtained the first counter-examples over algebraically closed fields.

Noether's problem continues to be extremely relevant to the Inverse problem in Galois theory \cite{Jensen}. For more information about permutation's Noether's problem, see \cite{Jensen} and \cite{Hoshi}.

\textbf{Linear Noether's problem} Let $G<GL_n(\mathsf{k})$ be a finite group acting linearly on $\mathsf{k}(x_1,\ldots,x_n)$. Is $\mathsf{k}(x_1, \ldots, x_n)^G$ a purely transcendental extension?

The first person to consider this kind of question, althought not in a sistematic way, was Burnside \cite{Burnside}.

Here is an important list of cases of positive solution:

\begin{theorem}\label{lista-2}
\begin{enumerate}
    \item All permutation actions considered previously.

    \item $n=1$, $G$ arbitrary. 
    
    \item $n=2$, $G$ arbitrary. 

    \item $n=3$, $G$ arbitrary, $\k$ algebrically closed of zero characteristic. \cite{Dumas}

    \item By Chevalley-Shephard-Todd Theorem (see Theorem \ref{C-S-T}), whenever the natural representation of $G$ is by a pseudo-reflection group and $char \, \k$ and $|G|$ are coprime. \cite{Bourbaki}, \cite{Dumas}.
\end{enumerate}
\end{theorem}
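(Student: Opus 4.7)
The plan is to treat Theorem \ref{lista-2} as a compendium of known results and verify each item separately, citing the appropriate source whenever a deep theorem is invoked.

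For item (1), I would simply observe that every permutation action of $G < S_n$ on $\k(x_1,\dots,x_n)$ arises from the underlying linear permutation representation $G \hookrightarrow GL_n(\k)$ sending each permutation to its permutation matrix. Hence item (1) is immediate from the corresponding items of Theorem \ref{lista-1}, and no further work is required.

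Item (2) is a direct computation: $GL_1(\k)=\k^*$ contains only cyclic finite subgroups, generated by some primitive $m$-th root of unity $\zeta\in\k$. If $G = \langle \zeta\rangle$ acts by $x\mapsto \zeta x$, then $\k(x)^G = \k(x^m)$, which is manifestly purely transcendental. (Strictly speaking one should also note that when $\k$ lacks the relevant roots of unity, $G$ must already act over an intermediate field containing them, and one reduces to this case.)

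For items (3) and (4), the plan is to quote the geometric rationality results from the excerpt's references. Item (3) follows because the quotient of an affine or projective plane by a finite linear group is always rational; in characteristic zero this is a direct consequence of Castelnuovo's criterion mentioned earlier, while a uniform treatment in arbitrary characteristic reduces to birationally classifying $\k(x,y)^G$. Item (4) is the content of Dumas's paper \cite{Dumas}, which I would simply invoke; reproducing its proof is well outside the scope of this list.

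Finally, item (5) is the cleanest: by the Chevalley--Shephard--Todd theorem (quoted as Theorem \ref{C-S-T} later in the paper), if $G \subset GL_n(\k)$ is generated by pseudo-reflections and $\mathrm{char}\,\k \nmid |G|$, then the invariant ring $\k[x_1,\dots,x_n]^G$ is itself a polynomial ring in $n$ algebraically independent homogeneous invariants $f_1,\dots,f_n$. Passing to fraction fields gives $\k(x_1,\dots,x_n)^G = \k(f_1,\dots,f_n)$, a purely transcendental extension of the same transcendence degree. The main obstacle in writing the proof is therefore not conceptual but organizational: item (4) relies on nontrivial birational geometry in \cite{Dumas} that cannot be reconstructed in a few lines, so the honest approach is to give the short arguments for (1), (2), (5), a brief indication for (3), and an explicit citation for (4).
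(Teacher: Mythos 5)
Your approach coincides with the paper's: Theorem \ref{lista-2} is presented there purely as a compendium of known results with inline citations and no written proof, and your item-by-item justification --- the permutation-matrix observation for (1), the cyclic computation $\k(x)^G=\k(x^m)$ (or Lüroth) for (2), the citation of \cite{Dumas} for (4), and Chevalley--Shephard--Todd giving $\k[x_1,\dots,x_n]^G=\k[f_1,\dots,f_n]$ followed by passage to fraction fields for (5) --- is exactly the intended content. The one soft spot is item (3): Castelnuovo's criterion only handles algebraically closed $\k$ of characteristic zero (indeed the paper itself recalls unirational non-rational real surfaces), while the statement imposes no hypothesis on $\k$, so rather than gesturing at a ``uniform treatment'' you should cite the known theorem that $\k(x,y)^G$ is rational for any finite $G<GL_2(\k)$ over an arbitrary field, which is how the claim is meant to be justified.
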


Again, as int the case of permutation groups, a positive solution of linear Noether's problem for a group $G$, shows that the Inverse problem in Galois theory has a positive solution for the same group, among other things \cite{Jensen}. In particular, this is be far the easiest way to show that pseudo-reflection groups give a positive solution to the Inverse problem.

Linear Noether's problem and the original Noether's problem are linked by the no-name lemma \cite{Plans0}: Let $G$ be a finite group. There exists a faithful, finite-dimensional, linear $\k$-representation $G \into GL(V)$ such that $\k(V)^G$ is a purely transcendental extension if and only if $\k(G)$ is stably-rational.

When we allow the action of infinite groups; that is, rational representations of a linear connected algebraic group $G$ on a finite dimensional vector space $V$, the questions of the rationality of $V/G$ or $P(V)/G$ --- which can also be understood as the rationality of the invariants of $k(V), \, k(P(V))$ --- are related to the question of rationality of many moduli spaces \cite{Bohning}.
As an example, if $G$ is a connected solvable algebraic group over an algebraically closed field, and $V$ is a rational representation, $V/G$ is rational --- the action of $G$ stailizes a flag of $V$ (by Lie-Kolchin theorem) and we may apply Miyata's theorem \cite{Miyata}. This problem, in this generality, has also many other important apllications: see \cite{C-T} and \cite{Dolgachev}.

Now let's recall the following notion

\begin{definition}
A $G$-lattice is a faithful $G$-module $M$ which is a finitely generated free abelian group.
\end{definition}

\textbf{Multiplicative Noether's Problem:} Let $M$ be a $G$-lattice. The group algebra $\mathsf{k}[M]$ is a ring of Laurient polynomials in $rank \, M$ indeterminates, where $G$ acts by algebra automorphisms. When is the invariant subfield $\mathsf{k}(M)^G$ a purely transcendental extension?

Pherhaps the most spetacular application of multiplicative Noether's problem is due to Procesi \cite{Procesi}. He realized that the question of (stable)-rationality of the center of the division ring of fractions of the ring of $2$ $n \times n$ generic matrices is equivalent to the positive solution of particular case o multiplicaive Noether's problem. A nice simplification of his ideas can be found in \cite{Formanek}. The center is a purely trancendental extension for $n=2,3,4$. For greater values of $n$, it is still an open probelm \cite{Drensky}.

There is a multiplicative analogue of the Chevalley-Shephard-Todd Thereom (see Theorem \ref{C-S-T} below ).

\begin{definition}
    Let $L$ be a $G$-lattice. We say that $G$ is a reflection group if, with its induced action on the $\mathbb{Q}$-vector space $\mathbb{Q} \otimes_\mathbb{Z} L$, $G$ acts as a reflection group.
\end{definition}

\begin{theorem}\label{multiplicative-cst}
    \emph{(Multiplicative Version of Chevalley-Shephard-Todd Theorem)}

    Let $L$ be a $G$-lattice, with $char \, \k$ and $|G|$ coprime. The following are equivalent:

    \begin{enumerate}
        \item $k[L]^G$ is a regular ring.
        \item $k[L]$ is a finitely generated projective $k[L]^G$-module.
        \item $k[L]$ is a finitely generated free $k[L]^G$-module.
        \item $k[L]^G$ is mixed Laurient polynomial ring.
        \item $G$ acts as a reflection group on $L$ and $\mathbb{Z}[L]^G$ is a unique factorization domain.
    \end{enumerate}
\end{theorem}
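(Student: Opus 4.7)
The plan is to split the theorem into two blocks: the formal equivalences (1)--(3), which hold for any finite group action on a regular ring whose order is invertible in $\k$, and the structural equivalences tying (1)--(3) to (4) and (5), for which the classical (linear) Chevalley--Shephard--Todd theorem is fed in at completions of closed points of the torus $\Spec \k[L]$.

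For (1) $\Leftrightarrow$ (2) $\Leftrightarrow$ (3), I would exploit that $|G|$ is invertible in $\k$, so the Reynolds operator splits $\k[L]^G \hookrightarrow \k[L]$ as $\k[L]^G$-modules; hence $\k[L]$ is faithfully flat over $\k[L]^G$, and $\k[L]^G$ is a Noetherian normal domain. Since $\k[L]$ is itself regular (it is a Laurent polynomial ring), the classical Auslander--Hochster--Eagon argument applies verbatim: $\k[L]^G$ is regular iff $\k[L]$ has finite projective dimension over it iff $\k[L]$ is projective, and projectivity upgrades to freeness because the $G$-equivariant structure forces the local ranks of $\k[L]$ over $\k[L]^G$ to be constant.

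The implication (4) $\Rightarrow$ (1) is immediate, since mixed Laurent polynomial rings are regular. For (3) $\Rightarrow$ (5) I would complete $\k[L]$ at a closed point $x \in \Spec \k[L]$ lying above a closed point of $\Spec \k[L]^G$: the stabilizer $G_x$ acts on $\widehat{\k[L]}_x$, which is a formal power series ring in $\rk L$ variables, and freeness descends. The classical Chevalley--Shephard--Todd theorem forces each $G_x$ to act by pseudo-reflections on the cotangent space, so $G$ acts as a reflection group on $L$; the UFD statement about $\mathbb{Z}[L]^G$ is then extracted from Nakajima's formula for the divisor class group of a multiplicative invariant ring, which vanishes precisely when the obstructing reflection characters are trivial. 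For (5) $\Rightarrow$ (4), the invertible $G$-invariants modulo $\k^\times$ form a sublattice of rank $\rk L^G$, supplying the Laurent variables; the reflection hypothesis together with the classical CST theorem applied locally at each fixed point yields regularity of $\k[L]^G$; and the UFD hypothesis promotes every remaining height-one prime to a principal ideal, whose $G$-equivariant generators (chosen via Nakajima's structure theorem) supply the polynomial variables.

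The main obstacle is (5) $\Rightarrow$ (4): converting local regularity produced by the classical CST theorem into an honest global mixed Laurent polynomial structure. The delicate step is separating the ``Laurent'' directions, indexed by $G$-invariant characters of the torus, from the ``polynomial'' directions coming from reflection hyperplanes; this is exactly where the UFD hypothesis on $\mathbb{Z}[L]^G$ enters, since without it the divisor class group presents an obstruction that prevents height-one primes from being principal, and one would obtain only a twisted form of a mixed Laurent polynomial ring rather than an honest one.
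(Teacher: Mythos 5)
The paper does not actually prove this statement: its ``proof'' is a citation to Lorenz's book (Theorem 7.1.1 of \emph{Multiplicative Invariant Theory}), so your sketch has to be measured against that argument. Parts of your plan are sound: $(1)\Leftrightarrow(2)$ by miracle flatness in one direction and faithfully flat descent of regularity in the other, $(4)\Rightarrow(1)$, and the reflection half of $(3)\Rightarrow(5)$ by completing at the $G$-fixed identity point of the torus, linearizing, applying linear Chevalley--Shephard--Todd, and lifting from $\k\otimes_\Z L$ to $\mathbb{Q}\otimes_\Z L$ using $\mathrm{char}\,\k \nmid |G|$. But there are three genuine gaps. First, $(2)\Rightarrow(3)$: constancy of the local ranks is automatic (the spectrum of the domain $\k[L]^G$ is connected) and does \emph{not} give freeness; there is no grading to run a graded Nakayama argument, and the known route gets freeness only \emph{after} the mixed Laurent polynomial structure (4) is available, via the Quillen--Suslin--Swan theorem for Laurent polynomial rings. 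So your ``formal block'' $(1)$--$(3)$ is not self-contained. Second, condition (5) is about $\Z[L]^G$, not $\k[L]^G$; your class-group step (misattributed to Nakajima, whose results concern linear, graded invariants --- the multiplicative computations are Lorenz's own) never addresses the passage between $\mathrm{Cl}(\k[L]^G)$ and $\mathrm{Cl}(\Z[L]^G)$.

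The most serious problem is $(5)\Rightarrow(4)$. The cotangent action of the isotropy group $G_x$ at a torus point $x$ is just the restriction of the $G$-action on $\k\otimes_\Z L$, and even when $G$ acts as a reflection group on $L$, the subgroup $G_x$ need \emph{not} be generated by the reflections it contains (a Steinberg-type failure that depends on the lattice); so ``the reflection hypothesis together with classical CST applied locally at each fixed point yields regularity of $\k[L]^G$'' is false as stated --- if it were true, the UFD hypothesis in (5) would be superfluous, whereas it is exactly what rules out these bad isotropy groups and cannot be brought in only afterwards to ``promote height-one primes.'' Lorenz's proof avoids this trap by a different structural input: multiplicative invariants of a group acting as a reflection group on $L$ are affine normal \emph{monoid algebras} $\k[M]$, and one then characterizes when such a monoid algebra is regular, respectively a UFD, namely when $M\cong \Z^{r}\times \mathbb{N}^{s}$, which is precisely the mixed Laurent polynomial statement (4). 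Without that monoid-algebra theorem (or a substitute controlling the isotropy groups via the class-group hypothesis), your separation of ``Laurent directions'' and ``polynomial directions'' remains a heuristic rather than a proof.
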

\begin{proof}
  \cite[Theorem 7.1.1.]{Lorenz}
\end{proof}

\begin{corollary}\label{example-semilaurient}
    If $L$ is a $G$-lattice that satisfy any of the above conditions, then $\mathsf{k}(L)^G$ is a purely transcendental extension. In particular, $\k[L]^G$ is polynomial if and only if $L$ is isomorphic to the weight lattice of some reduced root system and $G$ is the Weyl group.
\end{corollary}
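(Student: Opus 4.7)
The plan is to deduce both assertions directly from Theorem \ref{multiplicative-cst}. For the first claim, assume $L$ satisfies any (hence all) of the equivalent conditions; by (4), the invariant ring has the form $\k[L]^G \cong \k[y_1^{\pm 1},\dots,y_s^{\pm 1},y_{s+1},\dots,y_n]$ for some generators, where $n = \rk L$. Its field of fractions is simply $\k(y_1,\dots,y_n)$. Since the finite group $G$ acts on $\k[L]$ preserving non-zero-divisors, forming invariants commutes with localization, so $\k(L)^G = \Frac(\k[L]^G) \cong \k(y_1,\dots,y_n)$, a purely transcendental extension of transcendence degree $\rk L$.

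For the ``in particular'' clause, the sufficiency direction is classical: when $L$ is the weight lattice of a reduced root system and $G$ the associated Weyl group, the characters of the fundamental representations form a polynomial basis of $\k[L]^G$, as in Bourbaki's treatment of root systems \cite{Bourbaki}; I would simply cite this rather than reproduce the computation.

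For necessity, the first step is to observe that if $\k[L]^G$ is an honest polynomial ring, its unit group is $\k^\times$. Since the units of $\k[L]$ are exactly $\k^\times \cdot L$, taking $G$-invariants forces $L^G = 0$, so the ``Laurent part'' $s$ in condition (4) must vanish. Condition (5) then identifies $G$ as a reflection group on $L$. The main obstacle is the final classification step: among faithful reflection actions on a lattice with $L^G = 0$, isolating precisely the weight-lattice/Weyl-group pairs as those producing polynomial (rather than merely semi-Laurent) invariants. I would handle this by extending scalars to $\mathbb{Q} \otimes_{\Z} L$, where the classification of finite crystallographic reflection groups forces $G$ to be a Weyl group of a reduced root system, and then invoking the structural result from \cite{Lorenz} that, among the lattices sandwiched between the root and weight lattices of a given root system, only the weight lattice yields a polynomial algebra of invariants.
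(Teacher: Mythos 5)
Your proposal is correct in substance, but it takes a genuinely more explicit route than the paper, whose entire proof is a citation of \cite[Corollary 7.1.2]{Lorenz}. For the first claim your argument is complete and self-contained: condition (4) of Theorem \ref{multiplicative-cst} gives $\k[L]^G$ as a mixed Laurent polynomial ring, and the standard fact that for a finite group acting on a domain one has $\Frac(A)^G=\Frac(A^G)$ (clear denominators by multiplying by the conjugates $\prod_{g\neq 1}g(b)$) yields $\k(L)^G\simeq\k(y_1,\dots,y_n)$ with $n=\rk L$ fixed by transcendence degree, since $\k[L]$ is integral over $\k[L]^G$. For the ``in particular'' clause, your sufficiency direction (Bourbaki's theorem that the fundamental characters generate $\k[P]^W$ freely, \cite{Bourbaki}) and your unit-group observation (units of $\k[L]^G$ are $\k^\times\cdot L^G$, so polynomiality forces $L^G=0$ and hence, via condition (5), a faithful effective reflection action with $Q\subseteq L\subseteq P$ after passing to $\mathbb{Q}\otimes_{\Z}L$) are sound and are essentially the ingredients of Lorenz's own proof. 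The gain of your route is that it exposes which parts are elementary; the paper's citation buys brevity at the cost of opacity.

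Two caveats on the final classification step, which you rightly flag as the main obstacle. First, as you set it up you ultimately ``invoke the structural result from \cite{Lorenz}'', which is very close to citing the corollary being proved, so your necessity argument is not independent of the paper's source --- acceptable, but be aware of the near-circularity. Second, the phrasing ``among the lattices sandwiched between the root and weight lattices of a given root system, only the weight lattice yields a polynomial algebra of invariants'' is not literally true: for example the root lattice of $B_n$ coincides, as a lattice with Weyl-group action, with the weight lattice of $C_n$, and its multiplicative invariants are polynomial. The correct statement, and the one the corollary asserts, is that polynomiality holds exactly when $L$ is isomorphic \emph{as a $G$-lattice} to the weight lattice of some (possibly different) reduced root system whose Weyl group is $G$; your argument should be worded up to such isomorphism, which is how Lorenz's structure theory handles it.
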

\begin{proof}
    \cite[Corollary 7.1.2]{Lorenz}.
\end{proof}

Apart from this result, we remark that, in practice, is very difficult to find groups $G$ that satisfy the conditions of Theorem \ref{multiplicative-cst}, and in general one needs computer assistance \cite{Lorenz0}. Nonetheless

\begin{theorem}\label{lista-3}
If $rank \, L$  of the $G$-lattice is $1 \, ,2$ or $3$, $\k(L)^G$ will always be a purely transcendental extension, independtly of $G$.
\end{theorem}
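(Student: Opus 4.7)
The plan is to proceed by induction on the rank, leveraging the classification of finite subgroups of $\GL_n(\mathbb{Z})$ for $n=1,2,3$ and applying Theorem \ref{multiplicative-cst} wherever possible.

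For rank one, $\GL_1(\mathbb{Z})=\{\pm 1\}$, so the only nontrivial faithful action is $G\cong\mathbb{Z}/2\mathbb{Z}$ acting by $x\mapsto x^{-1}$. Then $\k(L)^G = \k(x+x^{-1})$ is purely transcendental of degree one, so this case is immediate.

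For rank two, I would invoke the classical fact that $\GL_2(\mathbb{Z})$ has only thirteen conjugacy classes of finite subgroups. Whenever $G$ acts by a reflection group on $L$, Theorem \ref{multiplicative-cst} and Corollary \ref{example-semilaurient} give the conclusion directly. The remaining cases are cyclic rotation groups of orders $3$, $4$, $6$, and each is treated by exhibiting explicit invariant generators and verifying algebraic independence (for instance, for $C_4$ acting on $\mathbb{Z}^2$ by the rotation matrix, one writes down symmetric functions in the orbit of $x$ and checks they generate a rational subfield of correct transcendence degree).

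For rank three the strategy is first to reduce: if $L^G$ has positive rank, one splits off the fixed sublattice (after clearing any cohomological obstruction via Hilbert 90) and applies a Miyata-style fibration argument to descend to a lower-rank problem that has already been settled. If $G$ acts without nontrivial fixed vectors in $L\otimes\mathbb{Q}$, one appeals to the classification of the (seventy-three) conjugacy classes of finite subgroups of $\GL_3(\mathbb{Z})$. Reflection subgroups are again handled by Theorem \ref{multiplicative-cst}; the remaining subgroups must be treated individually, by means of equivariant splittings of short exact sequences of $G$-lattices, Galois-theoretic descent, and direct invariant computation.

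The main obstacle is the rank three case: specifically the irreducible non-reflection subgroups of $\GL_3(\mathbb{Z})$, where the multiplicative Chevalley--Shephard--Todd theorem does not apply and one must construct a purely transcendental basis of $\k(L)^G$ by hand. This case analysis is essentially the content of Endo--Miyata's work and its subsequent refinements in the literature on multiplicative invariant theory, and no uniform short argument avoiding the classification of $\GL_3(\mathbb{Z})$-subgroups is known to me.
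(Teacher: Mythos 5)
Your proposal is an outline rather than a proof, and the gap is exactly where the theorem's content lies. For rank $1$ your argument is complete, and the general strategy you describe (classify the finite subgroups of $\GL_2(\mathbb{Z})$ and $\GL_3(\mathbb{Z})$ up to conjugacy, dispose of reflection groups via the multiplicative Chevalley--Shephard--Todd machinery, and treat the rest by hand) is indeed the shape of the known arguments. But for rank $2$ you only gesture at ``exhibiting explicit invariant generators and verifying algebraic independence'' for the rotation groups $C_3$, $C_4$, $C_6$ (and you also need the non-reflection groups containing them), and for rank $3$ you explicitly concede that the irreducible non-reflection subgroups ``must be treated individually'' with no argument supplied. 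That case analysis is not a routine verification one can wave at: it is the substance of the theorem, carried out in the literature (rank $2$ essentially by Voskresenskii/Hajja, rank $3$ by Hajja--Kang; your attribution to Endo--Miyata is off), and the paper itself does not reprove it either --- its ``proof'' is a citation to the survey \cite{Hoshi}. So as a self-contained argument your proposal does not establish the statement.

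Two smaller points. First, invoking Theorem \ref{multiplicative-cst} ``whenever $G$ acts by a reflection group on $L$'' is not quite licensed: the equivalent conditions of that theorem include, in item (5), that $\mathbb{Z}[L]^G$ be a unique factorization domain, so a reflection action alone does not automatically put you in its scope; rationality of $\k(L)^G$ for multiplicative reflection actions needs a separate argument (e.g., that the invariant ring is an affine semigroup algebra, whose fraction field is a Laurent field). Second, the reduction you propose in rank $3$ when $L^G\neq 0$ (splitting off the fixed sublattice and descending) is plausible but the splitting of the exact sequence $0\to L^G\to L\to L/L^G\to 0$ as $G$-lattices can fail, so the Miyata-style fibration step needs more care than ``clearing any cohomological obstruction via Hilbert 90''; Miyata's theorem applies to the field extension, not to a lattice splitting, and the way it is applied should be spelled out.
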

\begin{proof}
\cite{Hoshi}
\end{proof}

More on multiplicative Noether's problem can be found in the survey \cite{Hoshi} and on the book \cite{Lorenz}.

In this paper we will consider the most general form for Noether's problem. We will use the terminology in \cite{Jensen}:

\textbf{General Noether's problem }Let $G$ be any finite subgroup of automorphisms of $\k(x_1,\ldots,x_n)$ whatsoever. When is $\k(x_1,\ldots,x_n)^G$ a purely transcendental extension?

By Lüroth's Theorem, this problem has a positive solution for any field when $n=1$, and by Castelnuovo rationality criterion, for $n=2$ when $\k$ is algebraically closed and $char \, \k=0$

\section*{Noncommutative Noether's problem}

In the paper \cite{Alev} J. Alev and F. Dumas introduced the following noncommutative analogue of Noether's problem, usually called just noncommutative Noether's problem. Let $char \, \k=0$, $A_n(\k)$ denote the Weyl algebras, and $F_n(\k)$ denote their skew field of fractions, the Weyl fields. More generally, We write $A_{n,s}(\k)$ for $A_n(\k(x_1, \ldots, x_s))$, and $F_{n,s}(\k)$ their skew field of fractions. For the sake of simplicity write $F_{0,s}(\k)=\k(x_1, \ldots, x_s)$.

The point of view of noncommutative Noether's problem was influenced by the Gelfand-Kirillov conjecture \cite{GK}, that had great influence in the study of enveloping algebras, but was eventually shown to be false in general. The Conecture said that given be an algebraic Lie algebra $\mathfrak{g}$, then the skew field of fractions of $U(\mathfrak{g})$ is isomorphic to $F_{n,s}(\k)$ for adequate $n,s$. So the Weyl fields can be considered good noncommutative analogues of the field of rational functions. For more on the Conjecture, see \cite{Premet}.

\textbf{Noncommutative Noether's problem} Let $G$ be a finite group acting linearly in $A_n(\k)$, and hence on $F_n(\k)$. When $F_n(\k)^G \simeq F_n(\k)$?

\begin{remark}
    One might wonder why we don't consider the possibilty that $A_n(\k)^G$ is isomorphic to $A_n(\k)$. By a result of Alev and Polo \cite{AP}, this is knwon to be impossible.
\end{remark}

\begin{remark}
The question for infinite $G$ was also considered. In this case a small modification on the statemente is necessary; see \cite{Alev}.
\end{remark}

\begin{remark}
Invariants of the first Weyl field under other actions were considered in \cite{Alev0}.    
\end{remark}

In \cite{Alev} the problem was shown to have positive solution when $n=1,2$ and when the natural action of $G$ decomposes in a direct sum of one dimensional $G$-modules. In \cite{FMO} the problem was solved for the symmetric group with its permutation action, and as an consequence it was obtained the analogue of Gelfand-Kirillov Conjecture for finite $W$-algebras of type $A$. In \cite{EFOS} this fact was generalized for all complex reflection groups, and as corollary it was obtained the analogue of the Gelfand-Kirillov Conjecture for spherical subalgebras of rational Cherednik algebras and linear Galois algebras (which includes $U(\mathfrak{gl}_n)$, finite $W$-algebras of type $A$, OGZ algebra, \cite{FS2}). In \cite{FS} it was proved that if a finite linear action gives a positive solution to linear Noether's problem, then the same action gives a positive solution of noncommutative Noether's problem. This result was used to show the validity of Gelfand-Kirillov Conjecture for spherical subalgebras of trigonometric Cherednik algebras in \cite{Schwarz} and certain algebras in the paper \cite{Erich}. In \cite{Tikaradze} a kind of the converse result was shown: if $\k=\mathbb{C}$, and $G$ is a finite group of linear automorphsism defined over $\mathbb{Z}$ and $F_n(\mathbb{C})^G \simeq F_n(\mathbb{C})$, then for algebraically closed fields $\k$ of characteristic big enough, $\k(x_1,\ldots,x_n)^G$ is stably rational, and with this result counter-examples to noncommutative Noether's problem were found --- in fact, the group actions were the same as Saltman's counter-examples to Noether's problem, for in these counter-examples, the invariant subfield is also not stably-rational. It remains an interesting open problem to determine if linear Noether's problem and its noncommutative analogue are equivalent, although we conjecture that this does not hold.

Our purpose in this paper is two-fold. The first is to generalize the noncommutative Noether's problem, and the second is to consider its version in prime characteristic. We will denote the usual (Grothendieck's) differential operator ring by $\mathcal{D}$ and its crystalline version (from \cite{BMR}) by $\mathcal{D}_c$.

The main theorem of \cite{FS} is Theorem 1.1: if $G<GL_n(\k)$ is finite group of automorphisms that act linearly on $\k(x_1, \ldots, x_n)$ and on the Weyl algebra $A_n(\k)$ in the same way, a positive solution to linear Noether's problem implies a positive solution to noncommutative Noether's problem: $F_n(\k)^G \simeq F_n(\k)$. This theorem was proven using purely ring theoretical methods.

To generalize this result, we will need some basic affine algebraic geometry. Hence, from now on, we assume $\k$ algebraically closed.

Following the terminology in \cite{McConnell}, we will call $B_n(\k)=\mathcal{D}(\k(x_1,\ldots,x_n))=k(x_1,\ldots,x_n)\langle \partial_1, \ldots, \partial_n \rangle$. Any finite group $G$ of automorphisms of $\k(x_1,\ldots,x_n)$ extends to a group of automorphisms of $B_n(\k)$, and hence of $F_n(\k)$

\begin{theorem}\label{new-1}
In the context above, if general Noether's problem has a positive solution, then $F_n(\k)^G \simeq F_n(\k)$    
\end{theorem}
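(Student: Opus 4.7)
The plan is to reduce the statement to two ingredients: an étale descent formula for Grothendieck differential operators, and a standard result on fraction fields of fixed subrings.

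First I would set $K := \k(x_1,\dots,x_n)$ and $L := K^G$. The hypothesis of general Noether's problem gives a $\k$-algebra isomorphism $L \simeq \k(y_1,\dots,y_n)$. Since we are in characteristic zero and $G$ acts faithfully, $K/L$ is a finite separable Galois extension of degree $|G|$, hence étale.

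The core of the argument is the descent identity
\[
K \otimes_L \mathcal{D}(L) \xrightarrow{\;\sim\;} \mathcal{D}(K) = B_n(\k),
\]
valid because $K/L$ is étale (so $\Omega^1_{K/L}=0$, derivations of $L$ extend uniquely to $K$, and the statement propagates inductively to differential operators of all orders). Under this isomorphism the $G$-action of $\mathcal{D}(K)$ by conjugation is concentrated in the first tensor factor: for any $D \in \mathcal{D}(L)$, the conjugate $gDg^{-1}$ still extends $D|_L$ from $L$ to $K$ (since $g$ fixes $L$ pointwise), so by uniqueness of extension $gDg^{-1}=D$ in $\mathcal{D}(K)$. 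Because $\mathcal{D}(L)$ is $L$-free (a basis being the monomials in $\partial/\partial y_i$), taking $G$-invariants gives
\[
B_n(\k)^G \;=\; \mathcal{D}(K)^G \;=\; K^G \otimes_L \mathcal{D}(L) \;=\; L \otimes_L \mathcal{D}(L) \;=\; \mathcal{D}(L) \;\simeq\; B_n(\k),
\]
where the last isomorphism uses $L \simeq \k(y_1,\dots,y_n)$.

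Finally I would pass to fraction fields. Since $B_n(\k)$ is a Noetherian Ore domain and $|G|$ is invertible in $\k$, Montgomery's theorem yields $\Frac(B_n(\k))^G = \Frac(B_n(\k)^G)$, and therefore
\[
F_n(\k)^G \;=\; \Frac(B_n(\k))^G \;=\; \Frac(B_n(\k)^G) \;\simeq\; \Frac(B_n(\k)) \;=\; F_n(\k).
\]
The main technical hurdle is verifying the étale descent isomorphism $\mathcal{D}(K) \simeq K \otimes_L \mathcal{D}(L)$ and pinning down the form of the $G$-action under it; these are standard facts about differential operators on étale extensions but require a careful citation (or, geometrically, an appeal to the étale local triviality of $K/L$). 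Everything else is formal manipulation.
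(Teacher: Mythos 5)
Your argument is correct, but it takes a genuinely different route from the paper. You work entirely at the generic point: by Artin's lemma $K=\k(x_1,\dots,x_n)$ is finite Galois over $L=K^G$, and you prove by hand the descent statement $\mathcal{D}(K)\simeq K\otimes_L\mathcal{D}(L)$ with $G$ acting only on the first factor, so that $B_n(\k)^G=\mathcal{D}(K)^G\simeq\mathcal{D}(L)$, and then invoke the commutation of fixed rings with Ore fraction fields. The paper instead spreads the birational action out geometrically: it finds a $G$-invariant affine open $U\subset\mathbb{A}^n$ on which $G$ acts biregularly (Lemma \ref{main-lemma}) and freely (Lemma \ref{blaster-lemma}), applies the Cannings--Holland theorem $\mathcal{D}(U)^G\simeq\mathcal{D}(U/G)$ (Theorem \ref{free0}), and uses Muhasky's localization result to identify $\Frac\,\mathcal{D}(U/G)$ with $F_n(\k)$ (Lemma \ref{step-1}); the same implicit use of $(\Frac R)^G=\Frac(R^G)$ appears there too, so your appeal to Montgomery is not an extra cost. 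Your route is in effect a proof of Theorem \ref{free0} at the generic point of $\mathbb{A}^n\to\mathbb{A}^n/G$: it is more self-contained (no Cannings--Holland, no spreading-out lemmas, and it does not even need $\k$ algebraically closed), and it yields the stronger intermediate fact $\mathcal{D}(K)^G\simeq\mathcal{D}(K^G)$ independently of any rationality hypothesis. What the paper's geometric formulation buys is reusability: the same open-subset/free-action scheme is what carries over to Theorem \ref{new-2}, to the Poisson statements, and above all to prime characteristic (Theorem \ref{new-3}), where differential operators on function fields behave badly and one must work with crystalline operators on smooth affine models, so a purely function-field descent would not transport. To make your write-up complete you should justify two points you flag only in passing: that $\mathcal{D}(K)$ is generated by $K$ and $\mathrm{Der}_\k K$ (true in characteristic $0$ because $K$ is a localization of a regular affine domain, \cite[15.5.5, 15.1.24]{McConnell}), and the uniqueness of extensions of higher-order operators along $K/L$, which follows by induction on the order using $\mathrm{Der}_L(K)=0$ for the separable algebraic extension $K/L$; with those references in place the proof is sound.
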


We will offer also a direct, much simpler, proof of the next theorem, even it being just a corollary of the previous one.

\begin{theorem}\label{new-2}
Let $M$ be an $G$-lattice. Identify $\k[M]$ with $\k[x_1^{\pm 1}, \ldots, x_n^{\pm 1}]=\mathcal{O}(\k^{\times n})$, where $\k^{\times n}$ is the $n$-torus. Then $G$ acts on $\mathcal{D}(\k^{\times n})$, and hence on $F_n(\k)$. If multiplicative Noether's problem has a positive solution, then $F_n(\k)^G \simeq F_n(\k)$
\end{theorem}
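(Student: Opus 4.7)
The plan is to pass from $\k^{\times n}$ to an affine open subset on which $G$ acts freely, identify invariant differential operators with differential operators on the quotient, and then invoke multiplicative Noether's problem to show that this quotient is birationally $\A^n$, whose ring of differential operators has skew field of fractions $F_n(\k)$.

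First, $\mathcal{D}(\k^{\times n})$ is an Ore localization of $A_n(\k)$ obtained by inverting the (normal) elements $x_1, \ldots, x_n$, so its skew field of fractions is again $F_n(\k)$. For any finite group acting by automorphisms on an Ore domain $R$, a standard clearing-denominators argument yields $\Frac(R)^G = \Frac(R^G)$, hence
\[ F_n(\k)^G \,=\, \Frac\bigl(\mathcal{D}(\k^{\times n})^G\bigr). \]
Since $M$ is a faithful $G$-module the action of $G$ on $\k^{\times n}$ is generically free, so the non-free locus $Z$ is a proper closed $G$-stable subset. Choose any nonzero regular function $g \in \k[M]$ vanishing on $Z$ and set $f = \prod_{h \in G} h(g)$; then $f$ is a nonzero $G$-invariant function whose principal open $U = D(f) \subset \k^{\times n}$ is affine, $G$-stable, and a free $G$-variety. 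The geometric quotient $U/G$ is smooth affine with coordinate ring $\mathcal{O}(U)^G$, and $\mathcal{D}(U)^G = \mathcal{D}(U/G)$ by standard descent along étale Galois covers. Since $\mathcal{D}(U) = \mathcal{D}(\k^{\times n})[f^{-1}]$ is the Ore localization at a $G$-invariant element, taking invariants commutes with this localization, so $\Frac\bigl(\mathcal{D}(\k^{\times n})^G\bigr) = \Frac\bigl(\mathcal{D}(U/G)\bigr)$.

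Now I invoke the hypothesis: a positive solution to multiplicative Noether's problem gives $\k(U/G) = \k(M)^G \simeq \k(y_1, \ldots, y_n)$, so $U/G$ is birational to $\A^n$. Choose a common affine open $V$ realized both as an open subset of $U/G$ and as an open subset of $\A^n$. Then $\mathcal{D}(V)$ is simultaneously an Ore localization of $\mathcal{D}(U/G)$ and of $\mathcal{D}(\A^n) = A_n(\k)$, yielding
\[ \Frac\bigl(\mathcal{D}(U/G)\bigr) \,=\, \Frac\bigl(\mathcal{D}(V)\bigr) \,=\, \Frac\bigl(A_n(\k)\bigr) \,=\, F_n(\k). \]
Chaining the equalities gives $F_n(\k)^G \simeq F_n(\k)$, as required.

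The main technical step requiring care is the identification $\mathcal{D}(U)^G = \mathcal{D}(U/G)$ for a free finite group action on a smooth affine variety. It is classical but must be justified: one checks that $\mathcal{O}(U)$ is a finitely generated projective $\mathcal{O}(U/G)$-module of rank $|G|$, that derivations of $\mathcal{O}(U/G)$ lift uniquely to $G$-invariant derivations of $\mathcal{O}(U)$, and then proceeds by induction on the order filtration to recover all higher-order invariant operators from those on the quotient. This is the reason I pass to the free locus rather than attempting to work directly with $\mathcal{D}(\k^{\times n})^G$, which is typically not itself a ring of differential operators on any variety.
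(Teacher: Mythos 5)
Your argument is correct and is in essence the paper's: the paper's own one-line proof of this theorem simply notes that $G$ acts by biregular automorphisms of the torus and invokes \cite[Theorem 1.2]{FS}, whose content --- pass to an affine open subset where the finite action is free, identify $\mathcal{D}(U)^G$ with $\mathcal{D}(U/G)$, and compare skew fields of fractions via localization using rationality of the quotient --- is exactly what you reprove, specialized to $\k^{\times n}$. The only difference is presentational: you re-derive this machinery (as the paper itself does for Theorem \ref{new-1}), and your sketched étale-descent justification of $\mathcal{D}(U)^G \simeq \mathcal{D}(U/G)$ is the step the paper outsources to \cite[Theorem 3.7(1)]{CH}.
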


Now we move to prime characteristic (and keep the field algebraically closed). The definition of the Weyl algebra by generators and relations make perfect sense in prime characteristic, and $A_n(\k)$ is still a Noetherian domain (for more about the Weyl algebra in prime characteristic, see \cite{Revoy}. In particular, it is an Azumaya algebra over its center). In particular, its skew field of fractions, the Weyl fields $F_n(\k)$, exists, although now they are finite dimensonal over their centers. It has been stated explictly in this author PhD thesis that he conjectured some form of noncommutative Noether's problem would make sense in prime characteristic.

However, Grothendieck's rings of differential operators in prime characteristic are not suitable for us, as they are not Noetherian or domains, and in particular, $A_n(\k)$ and $\mathcal{D}(\mathbb{A}^n)$ are very different rings.

On the contrary, rings of crystalline differential operators have the desired properties: if $X$ is an smooth affine variety, $\mathcal{D}_c(X)$ is a Noetherian domain \cite{BMR}. We also have $A_n(\k)=\mathcal{D}_c(\mathbb{A}^n)$

In \cite[Theorem 1.2]{FS} we proved that given two affine varieties $X, Y$, and a finite group $G$ of automorphisms of $X$, if $X/G$ is birationally equivalent to $Y$, then $Frac \, \mathcal{D}(X)^G \simeq Frac \, \mathcal{D}(Y)$.

We have an analogue for rings of crystalline differential operators.

\begin{theorem}\label{new-3}
Let $\k$ be an algebraically closed field of prime characteristic. If $X$ is an smooth affine variety and $G$ a finite group of automorphisms of it, such that $X/G$ is birationally equivalent to an affine smooth variety $Y$, then $Frac \, \mathcal{D}_c(X)^G \simeq Frac \, \mathcal{D}_c(Y)$.
    
\end{theorem}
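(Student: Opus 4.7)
The plan is to adapt the strategy of Theorem 1.2 of \cite{FS} to the crystalline setting by replacing $\mathcal{D}$ with $\mathcal{D}_c$ throughout and verifying the functorial inputs that the characteristic-zero argument relies upon. I would first pass to a $G$-stable affine open $U\subseteq X$ on which $G$ acts freely with smooth quotient $U/G$, shrinking if necessary so that $U/G$ is isomorphic to an affine open subset of $Y$; this is possible because $X/G$ and $Y$ are birationally equivalent and $Y$ is smooth. The target is then the chain
\[
\Frac(\mathcal{D}_c(X)^G)=\Frac(\mathcal{D}_c(X))^G=\Frac(\mathcal{D}_c(U))^G=\Frac(\mathcal{D}_c(U)^G)=\Frac(\mathcal{D}_c(U/G))=\Frac(\mathcal{D}_c(Y)).
\]

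The first, third and last equalities would be handled by formal inputs. A standard Ore-type argument gives $\Frac(R^G)=\Frac(R)^G$ for any finite group action on a Noetherian domain $R$, which I would apply twice. And for any principal open $X_f\subseteq X$, one has $\mathcal{D}_c(X_f)=\mathcal{D}_c(X)[f^{-1}]$ as an Ore localization: this follows from the definition of $\mathcal{D}_c$ in \cite{BMR} as the restricted enveloping algebra of the tangent Lie algebroid $T_X$, together with the fact that restriction to $X_f$ commutes with forming $T_X$ and with the $p$-operation. Consequently, moving between $X$ and $U$, and between $Y$ and $U/G$, preserves the skew field of fractions.

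The heart of the argument is the identification $\mathcal{D}_c(U)^G\cong\mathcal{D}_c(U/G)$. Since $G$ acts freely on $U$ with smooth quotient, the projection $\pi:U\to U/G$ is a finite étale $G$-torsor, and $\pi^{\ast}T_{U/G}\cong T_U$ as restricted Lie algebroids: the étale pullback preserves the bracket and the $p$-operation $\xi\mapsto\xi^{[p]}$. This would give $\mathcal{D}_c(U)\cong \mathcal{O}(U)\otimes_{\mathcal{O}(U/G)}\mathcal{D}_c(U/G)$, and taking $G$-invariants while using that $\mathcal{O}(U)$ is a faithfully flat $\mathcal{O}(U/G)$-module of rank $|G|$ with $\mathcal{O}(U)^G=\mathcal{O}(U/G)$ then recovers $\mathcal{D}_c(U)^G\cong\mathcal{D}_c(U/G)$.

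The main obstacle I anticipate is making this étale-descent step rigorous in positive characteristic: one has to check that the restricted $p$-operation on vector fields is preserved by pullback along the $G$-torsor and is $G$-equivariant, so that the isomorphism $\mathcal{D}_c(U)\cong\mathcal{O}(U)\otimes_{\mathcal{O}(U/G)}\mathcal{D}_c(U/G)$ respects the full restricted-enveloping-algebra structure and not merely the underlying module. Everything else --- the Ore localization arguments and the invariants-versus-fractions interchange --- is an essentially formal translation of the ring-theoretic manipulations of \cite{FS}; once this descent compatibility is in place, chaining the equalities in the display above completes the proof.
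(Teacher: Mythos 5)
Your proposal follows essentially the same route as the paper: restrict to a $G$-stable affine open $U$ where the action is free, identify $\mathcal{D}_c(U)^G\simeq\mathcal{D}_c(U/G)$ using that $U\to U/G$ is étale so that $T_U\simeq\pi^*T_{U/G}$ (the paper concludes via the PBW/associated-graded structure, you via faithfully flat descent --- the same idea), and then use compatibility of $\mathcal{D}_c$ with localization to pass between birationally equivalent smooth affine varieties, together with the standard $\Frac(R^G)=\Frac(R)^G$ interchange. Your worry about the $p$-operation is not actually an issue, since $\mathcal{D}_c$ as used here is the full enveloping algebra of the tangent Lie algebroid (its defining relations involve no restricted structure), so the argument goes through as you outline.
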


\begin{remark}
Notice that, unlike the characteristic $0$ case, in prime characteristic we must restrict attention to smooth varieties.    
\end{remark}

\begin{corollary}\label{new-4}
    Let $G$ be a finite group of automorphisms of $\k[x_1, \ldots, x_n]$. If $\k(x_1, \ldots, x_n)^G$ is a purely transcendental extension, then $F_n(\k)^G \simeq F_n(\k)$
\end{corollary}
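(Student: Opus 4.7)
The plan is to deduce Corollary \ref{new-4} as a direct specialization of Theorem \ref{new-3}, taking both $X$ and $Y$ to be $\mathbb{A}^n_\k$. Any finite group $G$ of $\k$-algebra automorphisms of $\k[x_1,\ldots,x_n]$ is exactly a finite group of automorphisms of the smooth affine variety $X = \mathbb{A}^n_\k$, so the setup of Theorem \ref{new-3} applies. Moreover, by the identification $A_n(\k) = \mathcal{D}_c(\mathbb{A}^n)$ recalled in the paragraph preceding Theorem \ref{new-3}, we have $\Frac \mathcal{D}_c(\mathbb{A}^n) = F_n(\k)$.

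The next step is to translate the hypothesis of the corollary into the birational condition required by Theorem \ref{new-3}. Since $\k[x_1,\ldots,x_n]^G$ is a finitely generated $\k$-algebra whose field of fractions is $\k(x_1,\ldots,x_n)^G$, the quotient variety $X/G$ has function field $\k(x_1,\ldots,x_n)^G$. The assumption that this field is a purely transcendental extension of $\k$ of transcendence degree $n$ says exactly that $\k(X/G) \simeq \k(x_1,\ldots,x_n) = \k(\mathbb{A}^n)$, i.e., $X/G$ is birationally equivalent to $Y = \mathbb{A}^n$, which is of course smooth and affine.

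All hypotheses of Theorem \ref{new-3} are now in place for $X = Y = \mathbb{A}^n$, so the theorem yields $\Frac(\mathcal{D}_c(\mathbb{A}^n)^G) \simeq \Frac \mathcal{D}_c(\mathbb{A}^n) = F_n(\k)$. The only remaining point is to identify $\Frac(\mathcal{D}_c(\mathbb{A}^n)^G)$ with $F_n(\k)^G$; this is the standard fact that for a finite group $G$ acting by automorphisms on a Noetherian Ore domain $R$, taking invariants commutes with passage to the skew field of fractions, $\Frac(R^G) = (\Frac R)^G$. Combining, we obtain $F_n(\k)^G \simeq F_n(\k)$. I do not anticipate any serious obstacle here: the entire content is concentrated in Theorem \ref{new-3}, and the corollary is essentially a translation of its hypothesis into the language of rationality of invariants.
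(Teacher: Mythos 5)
Your proposal is correct and matches the paper's intent exactly: the paper derives Corollary \ref{new-4} directly from Theorem \ref{new-3} by taking $X = Y = \mathbb{A}^n$, using $\mathcal{D}_c(\mathbb{A}^n) = A_n(\k)$ and the fact that a purely transcendental invariant field (necessarily of transcendence degree $n$, since the extension over it is algebraic) means $X/G$ is birational to $\mathbb{A}^n$. Your explicit final step identifying $\Frac(\mathcal{D}_c(\mathbb{A}^n)^G)$ with $F_n(\k)^G$ via commutation of finite-group invariants with the skew field of fractions is the same identification the paper uses implicitly in its notation.
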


So, just like in characteristic 0 case, in prime characteristic a positive solution to Noether's problem implies a positive solution to its noncommutative analogue.

We also have

\begin{corollary}\label{torus}
Let $G$ be a finite group of automorphisms of $\k[x_1^{\pm 1}, \ldots, x_n^{\pm 1}]$.If $\k(x_1, \ldots, x_n)^G$ is a purely transcendental extension, then $F_n(\k)^G \simeq F_n(\k)$.
\end{corollary}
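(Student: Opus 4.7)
The plan is to derive Corollary \ref{torus} as a direct specialization of Theorem \ref{new-3}. Take $X=(\k^{\times})^n$, the algebraic $n$-torus, realized as the principal open subscheme of $\A^n$ complementary to the vanishing of $x_1 x_2 \cdots x_n$. Being open in $\A^n$, $X$ is smooth and affine, with coordinate ring $\k[X]=\k[x_1^{\pm 1}, \ldots, x_n^{\pm 1}]$ and function field $\k(X)=\k(x_1, \ldots, x_n)$, and $G$ acts on $X$ by automorphisms. By hypothesis $\k(X)^G$ is purely transcendental over $\k$, of transcendence degree $n$ since $\k(X)/\k(X)^G$ is finite. Hence $\k(X)^G \simeq \k(y_1, \ldots, y_n) = \k(\A^n)$, which is to say that $X/G$ is birationally equivalent to the smooth affine variety $Y=\A^n$.

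Applying Theorem \ref{new-3} to these data gives
\[ \mathrm{Frac}\,\mathcal{D}_c(X)^G \simeq \mathrm{Frac}\,\mathcal{D}_c(\A^n) = \mathrm{Frac}\,A_n(\k) = F_n(\k). \]
It remains to identify $\mathrm{Frac}\,\mathcal{D}_c(X)^G$ with $F_n(\k)^G$. Since $X$ is obtained from $\A^n$ by inverting the coordinate functions and crystalline differential operators are compatible with Ore localization on smooth affine varieties, $\mathcal{D}_c(X)$ is the Ore localization $A_n(\k)[x_1^{-1}, \ldots, x_n^{-1}]$; in particular $\mathrm{Frac}\,\mathcal{D}_c(X) = F_n(\k)$. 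For a finite-group action on an Ore domain, taking fractions commutes with taking $G$-invariants, so $\mathrm{Frac}\,\mathcal{D}_c(X)^G = F_n(\k)^G$. Combined with the displayed isomorphism this yields $F_n(\k)^G \simeq F_n(\k)$, as required.

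The main substantive obstacle, namely transferring birationality of the quotient to an isomorphism of fraction fields of invariant crystalline differential operator rings in prime characteristic, is already absorbed into Theorem \ref{new-3}. The remaining work in the corollary is the straightforward geometric observation that a positive solution to the multiplicative Noether's problem for $G$ amounts to saying that $X/G$ is birational to affine $n$-space, together with the fraction-field identification for the torus recorded above.
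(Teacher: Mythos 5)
Your proposal is correct and follows essentially the same route as the paper, which obtains Corollary \ref{torus} exactly by applying Theorem \ref{new-3} to the smooth affine torus $X=\k^{\times n}$ with $Y=\mathbb{A}^n$, the hypothesis saying precisely that $X/G$ is birational to $\mathbb{A}^n$. Your explicit identification of $Frac\,\mathcal{D}_c(\k^{\times n})^G$ with $F_n(\k)^G$ (via $\mathcal{D}_c(\k^{\times n})$ being a localization of $A_n(\k)$ and the standard compatibility of finite-group invariants with Ore fraction fields) is just the reading the paper intends, as in its characteristic-zero analogue Theorem \ref{new-2}.
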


This last corollary includes multiplicative Noether's problem.

Our next result is the version in prime charcteristic on the Gelfand-Kirillov Conjecture for rational Cherednik algebras (rational Cherednik algebras in prime characteristic were studied on a number of places, for instance, \cite{BC}).

\begin{theorem}\label{new-5}
 Assume $2|W| \in \k^\times$. Let $U_{1,c}(h,W)$ be a spherical subalgebra of a rational Cherednik algebra. Then $Frac \, U_{1,c}(h,W) \simeq F_n(\k) $, where $n=dim \, h$
\end{theorem}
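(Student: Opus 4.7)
The plan is to mimic the standard proof of the Gelfand--Kirillov Conjecture for spherical rational Cherednik algebras in characteristic zero, replacing the ring of Grothendieck differential operators by the ring of crystalline differential operators and invoking Theorem \ref{new-3} in place of its characteristic zero analogue \cite[Theorem 1.2]{FS}. The assumption $2|W| \in \k^{\times}$ guarantees (i) that $\mathrm{char}\,\k$ does not divide $|W|$, so the symmetrizer idempotent $e = |W|^{-1}\sum_{w \in W} w$ exists and the Chevalley--Shephard--Todd theorem applies to the action of $W$ on $h$; and (ii) that the Dunkl operators can be defined in the crystalline setting, giving an embedding of $H_{1,c}(h,W)$ into $\mathcal{D}_{c}(h_{\mathrm{reg}}) \rtimes W$.

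First I would review the PBW-type structure of $H_{1,c}(h,W)$ and set up the Dunkl embedding in the present context: a \mbox{$\k$-algebra} homomorphism
\[
\Theta : H_{1,c}(h,W) \,\hookrightarrow\, \mathcal{D}_{c}(h_{\mathrm{reg}}) \rtimes W,
\]
which is injective because $\Theta$ is a filtered map whose associated graded is the natural inclusion. Multiplying by $e$ on both sides yields an embedding
\[
e H_{1,c}(h,W) e \,\hookrightarrow\, e\bigl(\mathcal{D}_{c}(h_{\mathrm{reg}}) \rtimes W\bigr) e \;=\; \mathcal{D}_{c}(h_{\mathrm{reg}})^{W}.
\]
Localising both sides at the discriminant (a $W$-invariant element whose inversion defines $h_{\mathrm{reg}}$) makes the embedding an isomorphism, exactly as in characteristic zero; in particular the two algebras have the same skew field of fractions, so
\[
\mathrm{Frac}\bigl(U_{1,c}(h,W)\bigr) \;\simeq\; \mathrm{Frac}\bigl(\mathcal{D}_{c}(h_{\mathrm{reg}})^{W}\bigr).
\]

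Next I would use that $h_{\mathrm{reg}}$ is a $W$-stable open subvariety of $h$, hence birationally equivalent to $h$, so $\mathrm{Frac}\,\mathcal{D}_{c}(h_{\mathrm{reg}})^{W} = \mathrm{Frac}\,\mathcal{D}_{c}(h)^{W}$ by a direct extension-of-scalars argument. By the Chevalley--Shephard--Todd theorem and the hypothesis $|W| \in \k^{\times}$, the ring of invariants $\k[h]^{W}$ is a polynomial ring, so $h/W \simeq \mathbb{A}^{n}$ as smooth affine varieties, with $n = \dim h$. Applying Theorem \ref{new-3} to the action of $W$ on the smooth affine variety $h$, with $Y = \mathbb{A}^{n}$, yields
\[
\mathrm{Frac}\,\mathcal{D}_{c}(h)^{W} \;\simeq\; \mathrm{Frac}\,\mathcal{D}_{c}(\mathbb{A}^{n}) \;=\; \mathrm{Frac}\,A_{n}(\k) \;=\; F_{n}(\k),
\]
and stringing the isomorphisms together completes the proof.

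The main obstacle is the verification that the Dunkl embedding, together with the localisation-at-the-discriminant identification of $e H_{1,c} e$ with $\mathcal{D}_{c}(h_{\mathrm{reg}})^{W}$, survives in prime characteristic with $\mathcal{D}$ replaced by $\mathcal{D}_{c}$. The PBW theorem for $H_{1,c}$ and the filtration argument transfer routinely, but one must check that the Dunkl operators --- which involve denominators coming from the linear forms defining reflection hyperplanes --- land in $\mathcal{D}_{c}(h_{\mathrm{reg}})$ rather than in Grothendieck's (non-Noetherian) differential operator ring. Once this is settled, the remainder of the argument is formal and rests squarely on Theorem \ref{new-3} and the multiplicative/classical Chevalley--Shephard--Todd theorem.
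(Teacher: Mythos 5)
Your proposal follows essentially the same route as the paper: Dunkl embedding into $\mathcal{D}_c(h_{reg})\rtimes W$, multiplication by the symmetrizer $e$, localization at the discriminant to identify $U_{1,c}(h,W)$ with $\mathcal{D}_c(h_{reg})^W$ up to fractions, and then Chevalley--Shephard--Todd plus the prime-characteristic birational invariance machinery to land in $F_n(\k)$. The one step you leave open --- whether the Dunkl embedding and the localization-at-$\delta$ isomorphism survive in prime characteristic with $\mathcal{D}$ replaced by $\mathcal{D}_c$ --- is exactly the point the paper does not reprove but cites: Brown and Changtong \cite[Theorem 4.5 and Remark 4.6]{BC} establish $H_{1,c}(h,W)\delta^{-1}\simeq \mathcal{D}_c(h_{reg})*W$ under the standing hypothesis $2|W|\in\k^{\times}$ (and note that Dunkl operators, being $\mathcal{O}(h_{reg})$-combinations of derivations and group elements, automatically lie in the derivation-generated ring $\mathcal{D}_c(h_{reg})*W$, so no issue with Grothendieck's operators arises). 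So your ``main obstacle'' is closed by a citation rather than new work, and with that reference inserted your argument is complete. The only other divergence is cosmetic: after the identification $Frac\,U_{1,c}\simeq Frac\,\mathcal{D}_c(h_{reg})^W$ you pass back to $h$ and invoke Theorem \ref{new-3} for the $W$-action on $h$ with $Y=\mathbb{A}^n$, whereas the paper applies Theorem \ref{free} directly to the free action on $h_{reg}$ and then uses rationality of $h_{reg}/W$ via Proposition \ref{localization}; these are interchangeable, since Theorem \ref{new-3} itself proceeds by restricting to a free affine locus, and your intermediate equality $Frac\,\mathcal{D}_c(h_{reg})^W=Frac\,\mathcal{D}_c(h)^W$ is justified because $\delta$ is $W$-invariant, so $\mathcal{D}_c(h_{reg})^W$ is an Ore localization of $\mathcal{D}_c(h)^W$.
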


We also have a generalization of \cite[Theorem 3.14]{Schwarz} to prime characteristic.

\begin{theorem}\label{new-6}
Let $\k$ be an algebraically closed field of prime characteristic. Let $X$ be an smooth affine variety and $G$ a finite group of automorphisms of it such that $X/G$ is birrationally equivalent to a smooth affine variety $Y$. Then $\mathcal{O}(T^*X)^G$ has a field of fractions which is isomorphic as a Poisson field to the fraction field of $\mathcal{O}(T^*Y)$.
\end{theorem}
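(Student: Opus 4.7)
The plan is to reduce the problem to a calculation on the étale locus of the quotient map $\pi : X \to X/G$. Since $G$ acts faithfully on $X$, the extension $\k(X)/\k(X)^G$ is Galois of degree $|G|$, and hence separable, even in characteristic $p$. Combined with the birational equivalence $\k(X)^G \simeq \k(Y)$, generic étaleness produces a common affine open $U$ inside both $X/G$ and $Y$ such that $V := \pi^{-1}(U) \subset X$ is a $G$-stable affine open on which $\pi|_V : V \to U$ is étale; after shrinking, we may arrange $U$ to lie within the smooth locus of $Y$. Using that the fraction field of invariants equals the invariants of the fraction field, $Frac\, \mathcal{O}(T^*X)^G = Frac\, \mathcal{O}(T^*V)^G$ and $Frac\, \mathcal{O}(T^*Y) = Frac\, \mathcal{O}(T^*U)$, so the theorem reduces to showing $\mathcal{O}(T^*V)^G \simeq \mathcal{O}(T^*U)$ as Poisson algebras.

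For this local step, étaleness of $\pi|_V$ yields $\Omega^1_V = \pi^* \Omega^1_U$ and therefore a canonical isomorphism $T^*V \cong V \times_U T^*U$; equivalently, $\mathcal{O}(T^*V) = \mathcal{O}(V) \otimes_{\mathcal{O}(U)} \mathcal{O}(T^*U)$, with $G$ acting only on the first tensor factor. Since $\pi|_V$ is étale $G$-Galois, $\mathcal{O}(V)$ is faithfully flat of rank $|G|$ over $\mathcal{O}(U)$ with $\mathcal{O}(V)^G = \mathcal{O}(U)$, and commuting $G$-invariants with the flat base change $\mathcal{O}(U) \to \mathcal{O}(T^*U)$ gives
\[
\mathcal{O}(T^*V)^G \;=\; \mathcal{O}(V)^G \otimes_{\mathcal{O}(U)} \mathcal{O}(T^*U) \;=\; \mathcal{O}(T^*U).
\]
Naturality of the Liouville $1$-form under étale maps---a direct check yields $p_2^* \theta_U = \theta_V$ for the second projection $p_2 : T^*V \to T^*U$---shows that this identification intertwines the canonical symplectic forms, so it is an isomorphism of Poisson algebras, and the Poisson structure extends uniquely to fraction fields.

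The main delicate point, absent in characteristic zero, is that the functor of $G$-invariants need not be exact when $p$ divides $|G|$, so the classical Maschke-type averaging argument is unavailable. This is bypassed by using only the left-exactness of invariants together with flatness of $\mathcal{O}(T^*U)$ over $\mathcal{O}(U)$, which holds because $T^*U \to U$ is a vector bundle on the smooth locus: tensoring the defining left-exact sequence $0 \to M^G \to M \to \prod_{g \in G} M$, with $M = \mathcal{O}(V)$, against the flat module $\mathcal{O}(T^*U)$ preserves exactness and identifies $(\mathcal{O}(V) \otimes_{\mathcal{O}(U)} \mathcal{O}(T^*U))^G$ with $\mathcal{O}(V)^G \otimes_{\mathcal{O}(U)} \mathcal{O}(T^*U)$. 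A secondary subtlety is verifying that the resulting field isomorphism respects the Poisson bracket and not merely the ring structure; this follows from the functoriality of the canonical symplectic form on cotangent bundles established in the previous paragraph.
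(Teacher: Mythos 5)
Your proof is correct and follows essentially the same route as the paper: restrict to a $G$-stable affine open on which the quotient map is étale (the paper obtains this from the locus where $G$ acts freely, you from separability of the Galois extension $\k(X)/\k(X)^G$), identify the cotangent bundle upstairs with the pullback of the one downstairs and take $G$-invariants, then pass between $X$, the common open, and $Y$ by birational invariance of the Poisson function field of the cotangent bundle. Your explicit handling of the flat-base-change/left-exactness of invariants in characteristic $p$ and of the Liouville form merely fills in details that the paper leaves implicit in its appeals to Theorem \ref{quasi-classical1} and Proposition \ref{quasi-classical2}.
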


With this result, we can consider J. Baudry's Poisson Noether's problem \cite{Baudry} in prime characteristic.

Let $(V, \omega)$ be a symplectic vector space, $n=dim \, V$ and call its Poisson function field as $\mathcal{P}_n(\k)$. Let $X$ be a Poisson variety. We call $X$ Poisson rational if $\k(X)$ is isomorphic to $\mathcal{P}_n(\k), \, n=dim \, X$, as a Poisson field. Hence Poisson rationality is a refinement of the notion of rationality in the class of Poisson varieties.

 Poisson Noether's problem asks: let $(V, \omega)$ a symplectic vector space and $G$ a finite group of symplectomorphisms. When is $V/G$ Poisson rational?

Our contribution to this problem in prime characteristic will be Theorem \ref{prime-Baudry}.

\begin{remark}
    Other noncommutative analogues of Noether's problem can be found in \cite{KPPV} for free skew fields, and in \cite{Fryer}, \cite{FH}, \cite{H} for the skew field of tensor products of quantum planes.
\end{remark}

\section*{Rationaliy of rings of differential operators}

In this section the base field is algebraically closed of zero characteristic.

\begin{definition}

Let $X$ be an irreducible variety.

\begin{enumerate}
    \item $\mathcal{D}(X)$ is called rational if $Frac \, \mathcal{D}(X) \simeq F_m(\k)$ for some $m$.

    \item $\mathcal{D}(X)$ is called stably-rational if there exists an $n$ and $m$ such that $Frac \, (\mathcal{D}(X) \otimes A_n(\k)) \simeq F_m(k)$.

    \item $\mathcal{D}(X)$ is called unirational if there is an embedding $\mathcal{D}(X)$ into some $F_m(\k)$, for adequate $m$.
\end{enumerate}

\end{definition}

\begin{theorem}\label{new-7}
    \begin{enumerate}
        \item If $X$ is rational, then $\mathcal{D}(X)$ is rational. More precisely, if $X$ is birationally equivalent to $\mathbb{A}^n$, $Frac \, \mathcal{D}(X) \simeq F_n(\k)$.

        \item If $X$ is stably rational , then $\mathcal{D}(X)$ is stably rational. More precisely, if $X \times \mathbb{A}^n$ is rational, then $Frac \, (\mathcal{D}(X) \otimes A_n(\k)) \simeq F_m(\k)$, with $m=dim \, X + n$

        \item if $X$ is unirational, then $\mathcal{D}(X)$ is unirational. More precisely, if there is a rational dominant map $\mathbb{A}^n \rightarrow X$, then we have and embedding of $Frac \, \mathcal{D}(X)$ into $F_n(\k)$. If $Frac \, \mathcal{D}(X)$ embedds into $F_m(k)$, then necessarily $m \geq dim \, X$
    \end{enumerate}
\end{theorem}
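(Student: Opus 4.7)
The strategy is to use the birational invariance $\Frac\,\mathcal{D}(X) \simeq \Frac\,\mathcal{D}(Y)$ for birational smooth affine varieties (the trivial-$G$ case of \cite[Theorem 1.2]{FS}) together with the tensor decomposition $\mathcal{D}(X \times Y) \simeq \mathcal{D}(X) \otimes_\k \mathcal{D}(Y)$ available for smooth affine factors. Part (1) is then immediate: if $X$ is birational to $\A^n$ then $\Frac\,\mathcal{D}(X) \simeq \Frac\,\mathcal{D}(\A^n) = \Frac\,A_n(\k) = F_n(\k)$. For (2), the tensor formula gives $\mathcal{D}(X \times \A^n) \simeq \mathcal{D}(X) \otimes_\k A_n(\k)$, and applying (1) to $X \times \A^n$ (rational of dimension $m = \dim X + n$) yields $\Frac(\mathcal{D}(X) \otimes_\k A_n(\k)) \simeq F_m(\k)$.

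Part (3) is the genuinely new content. The dominant rational map $\A^n \dashrightarrow X$ induces an inclusion of function fields $F := \k(X) \hookrightarrow K := \k(x_1,\ldots,x_n)$, with $d := \dim X \le n$. I would use $\Frac\,\mathcal{D}(X) \simeq \Frac\,\mathcal{D}(F)$ (the latter being the Ore localization of $\mathcal{D}(X)$ at $\mathcal{O}(X)\setminus\{0\}$) together with $\Frac\,\mathcal{D}(K) = F_n(\k)$, and construct an embedding $\mathcal{D}(F) \hookrightarrow \mathcal{D}(K)$. Concretely, the plan is to fix a separating transcendence basis $z_1,\ldots,z_d$ of $F/\k$ and extend it inside $K$ to a separating transcendence basis $z_1,\ldots,z_n$ of $K/\k$, so that $\mathcal{D}(K)$ is generated over $K$ by commuting derivations $\partial_{z_1},\ldots,\partial_{z_n}$. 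Because $F$ is separable algebraic over $\k(z_1,\ldots,z_d)$ (characteristic $0$), the restriction of $\partial_{z_i}^K$ to $F$ preserves $F$ and, by uniqueness of extension of derivations through separable algebraic extensions, coincides with the intrinsic derivation $\partial_{z_i}^F$. Sending $F \hookrightarrow K$ and $\partial_{z_i}^F \mapsto \partial_{z_i}^K$ then respects all defining commutation relations of $\mathcal{D}(F)$, giving a ring embedding $\mathcal{D}(F)\hookrightarrow\mathcal{D}(K)$ which passes to skew fields to produce $\Frac\,\mathcal{D}(X)\hookrightarrow F_n(\k)$.

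For the lower bound $m \ge \dim X$, I would invoke the Gelfand--Kirillov transcendence degree $\mathrm{GKtr}_\k$, which is monotone under embeddings of finitely generated skew fields: since $\mathrm{GKtr}_\k F_m(\k) = 2m$ and $\mathrm{GKtr}_\k \Frac\,\mathcal{D}(X) = 2\dim X$, any embedding $\Frac\,\mathcal{D}(X)\hookrightarrow F_m(\k)$ forces $2\dim X \le 2m$. The main technical obstacle will be verifying that the construction in (3) is a bona fide ring homomorphism, which reduces precisely to the stability of $F$ under $\partial_{z_i}^K$ for $i \le d$ and the uniqueness of extension of derivations in separable algebraic field extensions; once these are in place the remainder of the argument is formal.
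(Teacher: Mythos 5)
Your overall architecture matches the paper's: (1) is birational invariance of $\Frac\,\mathcal{D}$, (2) reduces to (1) through the tensor decomposition, and (3) is an explicit embedding $\mathcal{D}(\k(X))\hookrightarrow B_n(\k)$ plus a transcendence-degree bound; your construction of the embedding in (3) (adapted transcendence basis, stability of $F$ under the $\partial_{z_i}^K$, uniqueness of extensions of derivations) correctly fills in what the paper only asserts. Two points, however, are genuine gaps.

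First, the theorem is stated for an arbitrary irreducible affine variety $X$, with no smoothness hypothesis. In (2) you apply $\mathcal{D}(X\times\A^n)\simeq\mathcal{D}(X)\otimes A_n(\k)$, which you yourself only claim for smooth affine factors, directly to $X$; in (1) you likewise restrict the birational invariance to smooth varieties. For (1) this is harmless, since no smoothness is needed: with $S=\mathcal{O}(X)\setminus\{0\}$ one has $\Frac\,\mathcal{D}(X)=\Frac\,\mathcal{D}(\mathcal{O}(X)_S)=\Frac\,B_n(\k)=F_n(\k)$ by \cite[Proposition 1.8]{Muhasky}, which is how the paper argues. For (2) the gap is real: the paper first picks $c\in\mathcal{O}(X)$ with $X_c$ regular (\cite[15.2.10]{McConnell}), notes that $c\otimes 1$ acts locally ad-nilpotently on $\mathcal{D}(X)\otimes A_n(\k)$ so that the Ore localization at its powers exists (\cite[Thm 4.9]{KL}) and $\Frac\,(\mathcal{D}(X)\otimes A_n(\k))=\Frac\,(\mathcal{D}(X_c)\otimes A_n(\k))$, and only then invokes the tensor decomposition (valid for regular coefficients of finite Krull dimension) for $X_c\times\A^n$ together with item (1). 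Without this reduction your argument only proves (2) for smooth $X$.

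Second, the inequality $m\ge\dim X$ is asserted rather than proved. Monotonicity of the Gelfand--Kirillov transcendence degree ${\rm Tdeg}$ under embeddings of division algebras is not a freely quotable fact: for a fixed finite-dimensional subspace $V$ of the subalgebra, the infimum defining ${\rm Tdeg}$ is taken over regular elements of the ambient algebra, so enlarging the algebra can only decrease it, and monotonicity is exactly the delicate issue here. Moreover the equality ${\rm Tdeg}\,\Frac\,\mathcal{D}(X)=2\dim X$ (for possibly singular $X$) itself requires an argument; knowing ${\rm GK}\,\mathcal{D}(X)$ does not directly give the transcendence degree of its skew field of fractions. The paper instead cites \cite[Theorem 10]{FSS}, which is built on Zhang's lower transcendence degree, an invariant that is monotone under embeddings by construction and for which the Weyl fields are LD-stable with value $2m$. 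You should either quote such a result or rework the last step with that invariant; as written, this final claim is the only part of (3) that does not go through.
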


\section{Preliminaries}

Suppose initially that $\k$ is an arbitrary field. Lets recall some definitions.

\begin{definition}
The $n$-th Weyl algebra $A_n(\k)$ is the algebra generated by generators, $x_1,\ldots,x_n,y_1,\ldots,y_n$ subject to relations

\[ [x_i,x_j]=[y_i,y_j]=0, \, [y_i,x_j]=\delta_{ij}, i,j=1,\ldots, n\]
\end{definition}

In case $char \, \k=0$, the Weyl algebras are simple Noetherian domains with center $\k$ \cite{McConnell}. In case of positive characteristic, $A_n(\k)$ again is Noetherian domain, but now the algebra is a finite module over its center $\k[x_1^p,\ldots,x_n^p,y_1^p,\ldots,x_n^p]$ \cite{Revoy}.

The Weyl algebras are related to rings of differential operators, introduced by Grothendieck \cite{Grothendieck}:

\begin{definition}
Let $A$ be a $\k$-algebra. Set

\[\mathcal{D}_0(A)=\{\theta \in End_\k \, A| [\theta, a]=0, \forall a \in A\} \simeq A,\]

and, inductively,

\[ \mathcal{D}_i(A)=\{\theta \in End_\k \, A| [\theta, a] \in \mathcal{D}_{i-1}(A), \forall a \in A \}. \]

The ring of differential operators on $A$ is $\mathcal{D}(A)=\bigcup_{i=0}^\infty \mathcal{D}_i(A)$.
\end{definition}

For an affine variety $X$, we call the ring of differential operators in $X$, $\mathcal{D}(X)=\mathcal{D}(\mathcal{O}(X))$. In case $char \, \k =0$, $\mathcal{D}(X)$ is an Ore domain \cite{FS}, and if moreover $X$ is smooth, $\mathcal{D}(X)$ is simple and left and right Noetherian \cite{McConnell}. Moreover, by the definition, $\mathcal{D}(X)$ comes with a filtration such that the associated graded algebra is $\mathcal{O}(T^* X)$ (for smooth $X$) (\cite{Grothendieck}, \cite{McConnell}). We have $A_n(\k)=\mathcal{D}(\mathbb{A}^n)$.

In caracteristic 0 and affine regular domains $A$, there is an alternative definition of differential operators that coincides with the Grothendieck's one: $\mathcal{D}(A)$ can be described as the subalgebra of $End_\k \, A$ generated by $A$ and $Der_\k \, A$ \cite[15.5.5]{McConnell}. For arbitrary $A$, if we denote this definition as $\Delta(A)$, we have that just htat$\Delta(A) \subset \mathcal{D}(A)$, and the inclusion may be proper if $A$ is not regular. In fact, the famous Nakai Conjecture \cite{McConnell} says that $\Delta{A}=\mathcal{D}(A)$ if and only if $A$ is a finitely generated regular domain. Notice that some patological behavior can happen if $A$ is not finitely generated. In a recent preprint \cite{MS}, an example of regular domain $A$ but not finitely generated is given such that $Der_\k \, A=0$ and the natural inclusion map $A \rightarrow \mathcal{D}(A)$ is an isomorphism.

For smooth varieties in $char \, \k=p>0$, as shown first by Smith \cite{Smith0}, the correct definition of ring of differential operators is the Grothedieck's one.  However, the situation changes drastically in positive characteristic. For instance, denoting by $P_n$ the polynomial algebra in $n$-indeterminates, $\mathcal{D}(P_n)$ is not Noetherian, not finitely generated, and has a lot of zero-divisors and nilpotent elements. Its Gelfand-Kirillov dimension also gives the ``wrong" number: $n$, instead of $2n$. However, the algebra is still simple \cite{Bavula}. In particular, $A_n(\k)$ and $\mathcal{D}(\mathbb{A}^n)$ are non-isomorphic rings.

It is clear that the classical notion of differential operator will lead us nowhere in our task to considering the noncommutative Noether's problem in prime characteristic --- they are not even domains. The solution is found in changing the technology: we will need a modified version of differential operators introduced in \cite{BMR} (a particular case of a more general definition in \cite{BO}): crystaline differential operators.

\begin{definition}
Let $\k$ be an algebraically closed field of prime characteristic, $X$ an smooth affine variety. $\mathcal{D}_c(X)$, the ring of crystalline differential operators on $X$, is generated by $\mathcal{O}(X)$ and $Der_\k \mathcal{O}(X)$ subject to the relations

\[ f.\partial=f\partial, \, \partial.f - f.\partial=\partial(f), \]

\[ \partial.\partial'-\partial'.\partial=[\partial,\partial'], f \in \mathcal{O}(X), \partial,\partial' \in Der_\k \mathcal{O}(X).\]

\end{definition}

\begin{remark}
The notion of crystalline differential operators works only for smooth varieties.    
\end{remark}

We have now that $\mathcal{D}_c(\mathbb{A}^n) \simeq A_n(\k)$.

$\mathcal{D}_c(X)$ has a natural filtration: $\mathcal{D}_c^0(X)=\mathcal{O}(X), \, \mathcal{D}_c^i(X)=\mathcal{D}_c^i{i-1}(X)+Der_\k \mathcal{O}(X).\mathcal{D}_c^i{i-1}(X)$.

\begin{proposition}\label{filtered}\cite{BMR}
\begin{enumerate}
\item $gr \, \mathcal{D}_c(X)=\mathcal{O}(T^*X)$.
\item The Poisson algebra structure on $\mathcal{O}(T^*X)$ induced by the filtered quantization by $\mathcal{D}_c(X)$ coincides with the usual Poisson algebra structure from the standard symplectic form on $T^*X$.
\end{enumerate}
\end{proposition}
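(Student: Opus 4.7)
The plan is to establish both parts by identifying $\mathcal{D}_c(X)$ with its local model. Since $X$ is smooth and affine, the module $T_X := \Der_\k \mathcal{O}(X)$ is finitely generated and projective over $\mathcal{O}(X)$, and dualizing Kähler differentials gives $\mathcal{O}(T^*X) \simeq \mathrm{Sym}_{\mathcal{O}(X)}(T_X)$. This is the target side of the identification in part (1).

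For part (1), the defining relations of $\mathcal{D}_c(X)$ force $[\partial, f] \in \mathcal{O}(X) = \mathcal{D}_c^0(X)$ and $[\partial, \partial'] \in T_X \subseteq \mathcal{D}_c^1(X)$, so in the associated graded the principal symbols of elements of $\mathcal{O}(X)$ and of $T_X$ commute in all combinations. The universal property of the symmetric algebra then produces a surjective graded $\mathcal{O}(X)$-algebra morphism $\mathrm{Sym}_{\mathcal{O}(X)}(T_X) \twoheadrightarrow gr\, \mathcal{D}_c(X)$. The main obstacle is injectivity, i.e.\ a PBW-type theorem for $\mathcal{D}_c(X)$: one must rule out unexpected collapses in low filtration. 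I would handle this locally. Smoothness allows one to cover $X$ by affine opens $U$ on which $T_U$ is free with basis $\partial_1, \ldots, \partial_n$ dual to étale coordinates $x_1, \ldots, x_n$; on such a $U$ the defining relations reduce to Weyl-algebra-style relations, and a faithful action of $\mathcal{D}_c(U)$ on $\mathcal{O}(U)$ shows that the ordered monomials $\partial_1^{i_1}\cdots \partial_n^{i_n}$ form an $\mathcal{O}(U)$-basis. Gluing, using the quasi-coherence of both sides as sheaves of $\mathcal{O}_X$-modules, yields the global isomorphism. The restriction to smooth $X$ is essential, both to ensure $T_X$ is projective and to produce the étale coordinates required for the local PBW argument.

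For part (2), with the identification of part (1) in hand, the Poisson bracket induced by the quantization, $\{\bar a, \bar b\} := \overline{[a,b]} \in gr^{i+j-1}$, is determined by its values on generators. On $f, g \in \mathcal{O}(X) = gr^0$ both brackets vanish; on $\partial \in T_X \subset gr^1$ and $f \in \mathcal{O}(X)$ the quantization bracket equals $\partial(f)$; and on $\partial, \partial' \in T_X$ it equals the Lie bracket $[\partial, \partial']$. These three formulas characterize the canonical Poisson structure on $T^*X$ once derivations are identified with fiberwise linear functions on the cotangent bundle, since in local symplectic coordinates $(x_i, \xi_i)$ on $T^*U$ the derivation $\partial_i$ corresponds to $\xi_i$ and the standard formulas $\{\xi_i, f\} = \partial_i(f)$ and $\{\xi_i, \xi_j\} = 0$ recover exactly the above. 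As $\mathcal{O}(X)$ and $T_X$ generate $\mathcal{O}(T^*X)$ as a Poisson algebra, the two structures agree.
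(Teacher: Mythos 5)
The paper itself does not prove this proposition---it quotes it from \cite{BMR}, where $\mathcal{D}_c(X)$ is the enveloping algebra of the tangent Lie algebroid and part (1) is the characteristic-free PBW theorem of Rinehart for Lie--Rinehart pairs $(\mathcal{O}(X),\Der_\k\mathcal{O}(X))$ with projective module of derivations. Measured against that, your outline has one genuine gap, and it sits exactly at the step you yourself flag as the main obstacle: injectivity of $\mathrm{Sym}_{\mathcal{O}(X)}(T_X)\twoheadrightarrow gr\,\mathcal{D}_c(X)$. You propose to get the local PBW basis from ``a faithful action of $\mathcal{D}_c(U)$ on $\mathcal{O}(U)$,'' but in the setting of this definition $\k$ has characteristic $p>0$, and the tautological action of $\mathcal{D}_c(U)$ on $\mathcal{O}(U)$ is \emph{not} faithful: already for $U=\mathbb{A}^n$ the element $\partial_i^{\,p}$ (nonzero, indeed central, in $A_n(\k)=\mathcal{D}_c(\mathbb{A}^n)$) acts by zero on $\k[x_1,\dots,x_n]$, and in general every $\partial^{\,p}-\partial^{[p]}$ lies in the kernel of $\mathcal{D}_c(X)\to \mathrm{End}_\k\,\mathcal{O}(X)$, whose image is only the subalgebra generated by functions and derivations inside Grothendieck's $\mathcal{D}(X)$ --- precisely the distinction between $\mathcal{D}_c$ and $\mathcal{D}$ that motivates this paper. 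Consequently the images of the ordered monomials $\partial_1^{i_1}\cdots\partial_n^{i_n}$ in $\mathrm{End}_\k\,\mathcal{O}(U)$ are genuinely $\mathcal{O}(U)$-linearly dependent, so no argument acting on $\mathcal{O}(U)$ alone can establish their independence in $\mathcal{D}_c(U)$; the argument you describe proves PBW only in characteristic $0$.

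The repair is standard: either invoke Rinehart's PBW theorem directly (it needs only projectivity of $T_X$, no hypothesis on the characteristic), or, keeping your local strategy with étale coordinates, replace the module $\mathcal{O}(U)$ by a bigger faithful filtered module --- for instance a symbol-calculus action on $\mathrm{Sym}_{\mathcal{O}(U)}(T_U)$ itself, or for the Weyl-algebra model an action on a polynomial ring in $2n$ variables (e.g.\ $x_i\mapsto$ multiplication by $x_i$, $\partial_i\mapsto \partial_{x_i}+y_i$), which does separate the ordered monomials in any characteristic. With part (1) so established, your verification of part (2) on the generators $\mathcal{O}(X)$ and $T_X$ ($\{\bar\partial,\bar f\}=\partial(f)$, $\{\bar\partial,\bar\partial'\}=\overline{[\partial,\partial']}$, identifying $T_X$ with fiberwise linear functions on $T^*X$) is correct and is the standard way this comparison of Poisson structures is checked.
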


Since the associated graded algebra is a finitely generated Noetherian domain, by usual filtered techniques \cite{McConnell}, we have that $\mathcal{D}_c(X)$ is a finitely generated left and right Noetherian domain. Hence we can study skew fields of fractions.

Moreover, we have (\cite{McConnell}):

\begin{proposition}\label{dimensions}
$GK\, \mathcal{D}_c(X)=2dim \, X$, $gl.dim \, \mathcal{D}_c(X) \leq dim \, X$, $\mathcal{K} \, \mathcal{D}_c(X) \leq dim \, X$.
\end{proposition}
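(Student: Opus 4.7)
The plan is to apply standard filtered-ring techniques (as in McConnell--Robson) to the good filtration on $\mathcal{D}_c(X)$ from Proposition \ref{filtered}, whose associated graded is the commutative coordinate ring $\mathcal{O}(T^*X)$ of the cotangent bundle of $X$. Since $X$ is smooth affine of dimension $n := \dim X$, the variety $T^*X$ is smooth affine of dimension $2n$, so $\mathcal{O}(T^*X)$ is a regular finitely generated commutative Noetherian domain of Krull dimension $2n$ and global dimension $2n$. I would treat the three assertions in turn using this as the central input.

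For the equality $GK\,\mathcal{D}_c(X) = 2n$, I would invoke the standard transfer principle: for a positively filtered algebra whose associated graded is finitely generated and Noetherian, the Gelfand--Kirillov dimension is preserved on passage to the associated graded (see \cite[Chap.~8]{McConnell}). Since $\mathcal{O}(T^*X)$ is a finitely generated commutative $\k$-algebra, its GK-dimension coincides with its Krull dimension, giving $2n$.

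For the bound $gl.dim\,\mathcal{D}_c(X) \leq n$, a naive filtered Serre-style argument yields only $gl.dim\,\mathcal{D}_c(X) \leq gl.dim\,\mathcal{O}(T^*X) = 2n$, which is too weak. To reach the sharper bound $n$ I would write down the Koszul--Chevalley--Eilenberg complex of the Lie algebroid $Der_\k\,\mathcal{O}(X)$, which by smoothness of $X$ is finitely generated projective of rank $n$ over $\mathcal{O}(X)$; this provides an explicit projective resolution of length $n$ of the natural $\mathcal{D}_c(X)$-module $\mathcal{O}(X)$. A dimension-shift argument using characteristic varieties (cf.~\cite[Chap.~7]{McConnell}) then propagates this to an $n$-bound on the projective dimension of an arbitrary finitely generated $\mathcal{D}_c(X)$-module. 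For the Krull dimension bound $\mathcal{K}\,\mathcal{D}_c(X) \leq n$ I would use the filtered-ring criterion relating the Gabriel--Rentschler Krull dimension to the dimension of the characteristic variety in $T^*X$, combined with a Bernstein-type inequality asserting that the characteristic variety of any nonzero finitely generated module has dimension at least $n$; strict chains in the Krull filtration then correspond to strict drops in a dimension ranging in $[n,\,2n]$, bounding the number of steps by $n$.

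The principal obstacle is precisely the sharper $n$-bound (rather than $2n$) for both the global and Krull dimensions; paralleling the characteristic zero picture, it rests on a Bernstein-type inequality for $\mathcal{D}_c(X)$, whose verification ultimately depends on the symplectic and Poisson structure on $gr\,\mathcal{D}_c(X) = \mathcal{O}(T^*X)$ and on the involutivity of characteristic varieties. Fortunately, Proposition \ref{filtered} identifies this Poisson structure with the standard one on $T^*X$, so the classical coisotropicity input is available and the arguments carry through.
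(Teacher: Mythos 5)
The GK-dimension part of your proposal is correct and is exactly what the paper relies on: the paper gives no argument at all beyond citing the filtered-ring machinery of \cite{McConnell} applied to the filtration of Proposition \ref{filtered} with $gr\,\mathcal{D}_c(X)=\mathcal{O}(T^*X)$.

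The gap is in your passage from the naive bound $2\dim X$ to $\dim X$ for the global and Krull dimensions. That step rests on a Bernstein-type inequality and on involutivity (coisotropicity) of characteristic varieties, and you assert that ``the classical coisotropicity input is available and the arguments carry through.'' It is not: both are characteristic-zero phenomena and fail for crystalline differential operators in characteristic $p$. Concretely, $\mathcal{D}_c(X)$ is a finite module over its center, which is (the Frobenius twist of) $\mathcal{O}(T^*X)$, and is Azumaya over it \cite{BMR}; already $A_1(\k)=\mathcal{D}_c(\mathbb{A}^1)$ is Azumaya of rank $p^2$ over $\k[x^p,\partial^p]$ \cite{Revoy} and hence has simple modules of $\k$-dimension $p$, whose GK dimension is $0<1=\dim X$. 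So Bernstein's inequality fails, the characteristic variety of a nonzero finitely generated module can be a point, and the proposed dimension-shift from the length-$n$ Chevalley--Eilenberg--Koszul resolution of $\mathcal{O}(X)$ to arbitrary modules collapses. In fact the Azumaya property forces $gl.dim\,A_n(\k)=\mathcal{K}(A_n(\k))=2n$ in characteristic $p$ (projective dimensions over an Azumaya algebra agree with those over its center, and Krull dimension is inherited from a module-finite central subring), so no argument can establish the $\leq\dim X$ bounds; the bounds that the filtered techniques of \cite{McConnell} --- which constitute the paper's entire proof --- actually deliver are $\leq 2\dim X$. Your observation that the naive filtered argument only yields $2\dim X$ was the right instinct; the error is the claim that the characteristic-zero machinery needed for the sharper bound transfers to $\mathcal{D}_c(X)$.
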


\begin{remark}
Proposition \ref{filtered} is well known for fields of zero characteristic and usual differential operators.    
\end{remark}

\section{Proof of characteristic 0 results}

Before we move to questions about the caracteristic prime case, let's discuss the results we have in characteristic 0.

\subsection{Generalizations of noncommuative Noether's problem}

In this section we will work over algebraically closed fields.

We want to use this result to prove Theorem \ref{new-1}. However, our automorphism group comes from $\k(x_1,\ldots,x_n)$, and not from an affine variety --- so we can't use \cite[Theorem 1.2]{FS}. But we are going to show that there always exists an open affine variety $X \subset \mathbb{A}^n$ such that the group of birational automorphisms on $\mathbb{A}^n$ restricts to biregular automorphism on $X$, and $X/G$ is still rational. Clearly, then, $Frac \, \mathcal{O}(X)^G \simeq \k(x_1,\ldots,x_n)^G \simeq \k(x_1, \ldots, x_n)$

\begin{lemma}\label{main-lemma}
Let $G$ be a finite group of birational automorphisms of an affine variety $X$. Then there is a $G$-invariant affine open subset $U$ where $G$ acts by biregular automorphisms.
\end{lemma}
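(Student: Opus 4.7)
The plan is to realize $G$ as a group of field automorphisms of $K = \Frac \mathcal{O}(X)$ and then carve out a single principal affine open subscheme of $X$ whose coordinate ring is stable under this action. Concretely, I would fix a finite set of generators $a_1, \ldots, a_r$ of the $\k$-algebra $A = \mathcal{O}(X)$ and, for each $g \in G$ and each $i$, write $g(a_i) = p_{g,i}/q_{g,i}$ with $p_{g,i}, q_{g,i} \in A$ and $q_{g,i} \neq 0$. Setting $t = \prod_{g,i} q_{g,i}$, one gets $g(A) \subset A[t^{-1}]$ for every $g \in G$, so the birational map $g$ restricts to a regular morphism from the principal affine open $D(t) = \Spec A[t^{-1}]$ into $X$.

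The remaining obstacle is that $D(t)$ need not be sent into itself by each $g$, hence is not yet $G$-stable. To fix this I would set $s = \prod_{g \in G} g(t) \in A[t^{-1}]$, which is visibly $G$-invariant (left multiplication reindexes the product), and write $s = s'/t^N$ for some $s' \in A$ and $N \geq 0$. Define $U = D(ts') \subset X$. Its coordinate ring $\mathcal{O}(U) = A[(ts')^{-1}]$ coincides with $A[t^{-1}][s^{-1}]$, because $s$ and $s'$ differ by the unit $t^{-N}$ in $A[t^{-1}]$. The subring $A[t^{-1}]$ is stable under every $g \in G$ by construction, and the subsequent inversion of the $G$-invariant element $s$ preserves this stability, so $g(\mathcal{O}(U)) \subset \mathcal{O}(U)$ for every $g \in G$. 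Because $G$ is a group, applying the same to $g^{-1}$ forces each $g$ to be a $\k$-algebra automorphism of $\mathcal{O}(U)$, i.e., a biregular automorphism of $U$. Finally $U$ is nonempty, since $t \neq 0$ and each $g(t) \neq 0$ in $K$, hence $s \neq 0$ and $s' \neq 0$.

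The main subtle point is reconciling $G$-invariance with affineness in a single step: the naive open set $D(t)$ on which every $g$ is regular is principal and therefore affine but is not $G$-stable, while one has no right to expect the $G$-saturation of an affine open to stay affine. The replacement of $t$ by the norm-like product $s = \prod_{g \in G} g(t)$, coupled with working in the smaller principal open $D(ts')$, is what simultaneously achieves $G$-stability and preserves affineness; this is the step I would flag as the heart of the argument.
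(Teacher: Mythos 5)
Your route is genuinely different from the paper's: the paper takes, for each $g\in G$, an affine open $U_g$ on which $g$ is biregular onto its image, intersects them, and then passes to the intersection of the $G$-translates, whereas you produce a single principal open $D(ts')$ by a norm-type trick inside the function field. The construction itself does work, and it even has the advantage of yielding a principal (hence visibly affine) $G$-stable open, with no appeal to the fact that finite intersections of affine opens in a separated variety are affine.

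However, one step of your justification is false as stated: the claim that ``the subring $A[t^{-1}]$ is stable under every $g\in G$ by construction.'' By construction you only know $g(A)\subset A[t^{-1}]$; stability would also require $g(t^{-1})=1/g(t)\in A[t^{-1}]$, and $g(t)$ need not be a unit of $A[t^{-1}]$. Concretely, for $X=\mathbb{A}^2$, $A=\k[x,y]$ and the birational involution $g(x)=x$, $g(y)=x/y$, your recipe gives $t=y$, and $g(y^{-1})=y/x\notin\k[x,y^{\pm 1}]$, so $A[t^{-1}]$ is not $g$-stable. The gap closes with ingredients you already have: in $A[t^{-1}]$ each $g(t)$ divides $s=\prod_{h\in G}h(t)$, because the remaining factors $h(t)$ also lie in $A[t^{-1}]$; hence after inverting $s$ every $g(t)$ becomes a unit of $\mathcal{O}(U)=A[t^{-1}][s^{-1}]$, and then $g(\mathcal{O}(U))\subset\mathcal{O}(U)$ follows from $g(A)\subset A[t^{-1}]$, $g(t)^{-1}\in\mathcal{O}(U)$ and $g(s^{-1})=s^{-1}$ (equivalently $g(s')=s\,g(t)^{N}$ is invertible there). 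With this correction your argument, including the concluding point that $g$ and $g^{-1}$ both preserve $\mathcal{O}(U)$ and hence act by automorphisms, is complete; in the example above the repaired open is the torus $D(xy)$, which is indeed $g$-stable.
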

\begin{proof}
For each $g \in G$ there is an affine open subset $U_g \subset X$ such that $g:U_g \rightarrow g(U_g)$ is a biregular isomorphism.

Calling $U=\bigcap_{g \in G}U_g$, we have that $g: U \rightarrow g(U)$ is a biregular isomorphism for every $g \in G$. $U$ is again affine, since finite intersection of affine open subsets is again affine.

Setting $W=\bigcap_{g \in G} g(U)$, we have that $W$ is $G$-invariant, affine, and each $g \in G$ is a biregular isomorphism of $W$ onto itself.
\end{proof}

\begin{example}\label{example-1}
Let $G<GL_n(\mathbb{Z})$ act on $\k(x_1,\ldots,x_n)$ as follows:

\[ g.x_j=\prod_{i=1}^n x_i^{a_{ij}}, g=(a_
{ij})_{i,j=1,\ldots,n} \in GL_n(\mathbb{Z}), \, j=1,\ldots,n.\]

These are precisely the group actions that arise in the multiplicative Noether's problem mentioned in the Introduction. $G$ corresponds to a finite group of birational automorphisms of the affine space $\mathbb{A}^n$. A open affine subset $U$ as in the previous lemma is $Spec \, \k[x_1^{\pm 1},\ldots,x_n^{\pm 1}] = \k^{\times n}$, the $n$ torus. This is the case because $G$ acts on $\k[x_1^{\pm 1},\ldots,x_n^{\pm 1}]$ by \emph{algebra} automorphisms and on $\k^{\times n}$ by biregular automorphisms.
\end{example}

\begin{lemma}\label{step-1}
Suppose $X$ an affine variety, $G$ a group acting on it, such that $X/G$ is rational.  Then  $Frac \, \mathcal{D}(X/G) \simeq F_n(\k)$, $n=dim \, X$.
\end{lemma}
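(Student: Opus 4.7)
The plan is to exploit birational invariance of $Frac\,\mathcal{D}$ on irreducible affine varieties. Since $G$ is finite, $\dim X/G = \dim X = n$, and the hypothesis that $X/G$ is rational provides a birational equivalence between $X/G$ and $\mathbb{A}^n$. First I would extract a common principal affine open subvariety from this equivalence: identifying $\k(X/G) \simeq \k(x_1,\ldots,x_n)$ and clearing denominators in both directions yields principal opens $V = D(f) \subset X/G$ and $V' = D(g) \subset \mathbb{A}^n$ with $V \cong V'$ as affine varieties. The goal then reduces to showing that both $Frac\,\mathcal{D}(X/G)$ and $Frac\,\mathcal{D}(\mathbb{A}^n) = F_n(\k)$ agree with $Frac\,\mathcal{D}(V)$.

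The key lemma is: for any irreducible affine variety $Y$ and basic affine open $V = D(f) \subset Y$, $Frac\,\mathcal{D}(Y) \simeq Frac\,\mathcal{D}(V)$. Recall from the Preliminaries that $\mathcal{D}(Y)$ is an Ore domain in characteristic $0$. Hence the Ore localization at the multiplicative set $\{f^m\}_{m\geq 0} \subset \mathcal{O}(Y) \subset \mathcal{D}(Y)$ exists, and I would identify it with $\mathcal{D}(V) = \mathcal{D}(\mathcal{O}(Y)[f^{-1}])$ using the standard fact that differential operators extend uniquely to localizations of the underlying commutative ring. Applied to the two embeddings $V \subset X/G$ and $V' \subset \mathbb{A}^n$, this chains together to give
\[ Frac\,\mathcal{D}(X/G) \simeq Frac\,\mathcal{D}(V) \simeq Frac\,\mathcal{D}(\mathbb{A}^n) = F_n(\k). \]

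The main obstacle I anticipate is the identification $\mathcal{D}(Y)[f^{-1}] \simeq \mathcal{D}(D(f))$ when $Y$ is singular, as $X/G$ need not be smooth under the given hypotheses. For smooth $Y$ this is classical; in general one must argue inductively on the order filtration, verifying at each step that every differential operator on $D(f)$ of order $\leq i$ arises as some element of $\mathcal{D}_i(Y)$ divided by a power of $f$. This boils down to the defining bracket relations of $\mathcal{D}_i$, which remain valid without smoothness, so the argument should go through cleanly.
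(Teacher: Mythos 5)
Your argument is correct in substance, but it routes through more geometry than the paper does, and the two proofs lean on the same technical pillar. The paper's proof is a one-step localization: writing $A=\mathcal{O}(X/G)=\mathcal{O}(X)^G$ and $S=A\setminus\{0\}$, it invokes the localization theorem for differential operators on affine domains (Proposition \ref{proposition-Muhasky}, i.e.\ Muhasky's Proposition 1.8, which holds for arbitrary, possibly singular, finitely generated domains and arbitrary multiplicative sets) to get $Frac\,\mathcal{D}(X/G)=Frac\,\mathcal{D}(A)_S=Frac\,\mathcal{D}(A_S)$; since $A_S=\k(X/G)\simeq\k(x_1,\ldots,x_n)$ by rationality, this equals $Frac\,B_n(\k)=F_n(\k)$, with no need to compare open subvarieties at all. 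You instead extract a common principal open $V$ of $X/G$ and $\mathbb{A}^n$ and compare both $Frac\,\mathcal{D}(X/G)$ and $Frac\,\mathcal{D}(\mathbb{A}^n)=Frac\,A_n(\k)$ with $Frac\,\mathcal{D}(V)$. This works, and it has the mild advantage of only requiring localization at a single element $f$ (where the existence of the Ore localization is easy, $f$ acting ad-nilpotently), at the cost of the extra step of producing the common principal open, which the paper's direct passage to the function field avoids. The one place where your sketch understates the difficulty is the identification $\mathcal{D}(Y)_f\simeq\mathcal{D}(D(f))$ for singular $Y$: extending an operator to the localization is indeed a routine induction on the order filtration, but the converse direction --- that every operator on $D(f)$ has bounded denominators, i.e.\ is $f^{-m}$ times an operator coming from $Y$ --- does not follow from ``the defining bracket relations'' alone; it uses finite generation of $\mathcal{O}(Y)$ (for instance via the finitely generated modules of principal parts), and is precisely the content of Proposition \ref{proposition-Muhasky}. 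You should cite that result (as the paper does) rather than re-derive it; with that citation, plus the observation that Ore localization does not change the skew field of fractions (Lemma \ref{obvious-lemma}), your chain $Frac\,\mathcal{D}(X/G)\simeq Frac\,\mathcal{D}(V)\simeq Frac\,\mathcal{D}(\mathbb{A}^n)=F_n(\k)$ is complete.
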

\begin{proof}
Let $S=\mathcal{O}(X)^G \setminus \{ 0 \}$. $Frac \, \mathcal{D}(X/G)= Frac \, \mathcal{D}(X/G)_S =Frac \, \mathcal{D}(\mathcal{O}^G_S)$, where we used \cite[Proposition 1.8]{Muhasky} and that $\mathcal{O}(X/G)=\mathcal{O}(X)^G$. Since $X/G$ is rational, $\mathcal{O}(X)^G_S \simeq \k(x_1,\ldots,x_n)$. Hence $Frac \, \mathcal{D}(X/G) \simeq Frac \, B_n(\k)=F_n(\k)$. 
\end{proof}

The proof of the following lemma work in any characteristic.

\begin{lemma}\label{blaster-lemma}

Let $G$ a finite group acting on an affine variety $X$. Then there is an open subset $U \subset X$ where $G$ restricts to a free action. Moreover, there is no loss of generality in assuming $U$ affine.
    
\end{lemma}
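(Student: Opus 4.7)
My plan is to build $U$ in two stages: first produce a $G$-invariant open subset $U_0 \subset X$ on which $G$ acts freely, and then cut down to a $G$-invariant principal affine open inside $U_0$. Assuming (as is implicit in the paper's setting) that the $G$-action is faithful and $X$ is irreducible, for each nontrivial $g \in G$ the fixed locus $X^g := \{x \in X : g(x) = x\}$ is a proper closed subvariety of $X$, since otherwise $g$ would act trivially. I would set
\[
U_0 \;:=\; X \setminus \bigcup_{g \in G \setminus \{e\}} X^g,
\]
which is a nonempty open subset of $X$ by irreducibility, and on which $G$ acts freely by construction. The $G$-invariance of $U_0$ follows from a conjugation argument: if $h(x) \in X^g$ for some $h \in G$ and some $g \neq e$, then $x \in X^{h^{-1}gh}$, forcing $x \notin U_0$, a contradiction.

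For the second stage, any open subset of an affine variety is a union of principal opens, so I can pick $f \in \mathcal{O}(X)$ with $\emptyset \neq D(f) \subset U_0$. To produce a $G$-invariant refinement, I set $F := \prod_{g \in G} g(f)$, which is $G$-invariant, nonzero (since $\mathcal{O}(X)$ is a domain and each $g(f) \neq 0$), and satisfies
\[
D(F) \;=\; \bigcap_{g \in G} D(g(f)) \;=\; \bigcap_{g \in G} g(D(f)) \;\subset\; U_0
\]
by the $G$-invariance of $U_0$. Since finite intersections of principal opens in an affine variety are principal opens, hence affine, setting $U := D(F)$ yields the desired $G$-invariant affine open subset on which $G$ acts freely.

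I do not anticipate a real obstacle. Every ingredient — closedness of fixed loci of automorphisms, affineness of principal opens, and affineness of finite intersections of affine opens in a separated variety — is purely algebraic-geometric and characteristic-independent, which is exactly why the author remarks that this lemma's proof works in any characteristic. The only minor bookkeeping issues are the faithfulness hypothesis (which is standard in the Noether problem context, since $G$ is a group of automorphisms) and irreducibility of $X$; if $X$ were reducible one would work component-by-component, but this does not affect the substance of the argument.
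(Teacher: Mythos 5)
Your proof is correct, and its first stage is exactly the paper's argument: the fixed loci $Y_g=\{x: g(x)=x\}$ are closed (the paper sees this via the map $(\mathrm{id},g)\colon X\to X\times X$ and the diagonal, you via properness of $X^g$ under faithfulness and irreducibility), so the complement of their finite union is a nonempty open set on which $G$ acts freely. Where you go beyond the paper is the second stage: the paper's proof stops there and simply asserts ``no loss of generality in assuming $U$ affine,'' whereas you actually justify it, producing a $G$-invariant principal affine open $D(F)$ with $F=\prod_{g\in G} g(f)$ inside the free locus; you also verify $G$-invariance of the free locus by the conjugation argument, which the paper leaves implicit. This extra step is worthwhile, since in the applications (Theorems \ref{new-1}, \ref{new-3}, \ref{new-6}) one needs a $G$-invariant \emph{affine} open to invoke Theorem \ref{free0} resp.\ Theorem \ref{free}; in the paper that reduction is effectively delegated to Lemma \ref{main-lemma} (intersecting the $G$-translates), while your norm trick achieves it directly and keeps the open set principal. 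The only caveats are the ones you already flag: faithfulness (so the $X^g$ are proper) and irreducibility (so $\mathcal{O}(X)$ is a domain and the free locus is nonempty), both satisfied in the paper's setting.
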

\begin{proof}
    For each $g \neq 1$ in $G$ call $Y_g=\{ x \in X| g(x)=x \}$. $Y_g$ is closed being the inverse, uder the map $(id, g): X \rightarrow X \times X$, of the diagonal. Since $G$ is finite, $Y= \bigcup Y_g$ is a closed subset of $X$ in the Zarisk topology, and $U = X \setminus Y$ is an open set where $G$ acts freely.
\end{proof}

\begin{theorem}\label{free0}
    Let $X$ be a variety and $G$ a group acting freely on it. Then $\mathcal{D}(X)^G \simeq \mathcal{D}(X/G)$
\end{theorem}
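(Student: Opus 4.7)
The plan is to build mutually inverse ring homomorphisms between $\mathcal{D}(X)^G$ and $\mathcal{D}(Y)$, where $Y := X/G$. The key geometric input is that, because the finite group $G$ acts freely on the affine variety $X$, the quotient map $\pi \colon X \to Y$ is a finite étale Galois cover with group $G$; in particular $\mathcal{O}(Y) = \mathcal{O}(X)^G$ and $\mathcal{O}(X)$ is a finite étale $\mathcal{O}(Y)$-algebra on which $G$ acts by $\mathcal{O}(Y)$-algebra automorphisms.

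The easy half is to construct a restriction map $\rho \colon \mathcal{D}(X)^G \to \mathcal{D}(Y)$. Any $D \in \mathcal{D}(X)^G$ satisfies $g(Da) = D(ga) = Da$ for every $a \in \mathcal{O}(X)^G$, so $D$ preserves the subalgebra $\mathcal{O}(Y) \subset \mathcal{O}(X)$. That the resulting $\k$-linear endomorphism $D|_{\mathcal{O}(Y)}$ actually belongs to $\mathcal{D}(Y)$ (and not merely to $\mathrm{End}_\k \mathcal{O}(Y)$) is verified by induction on the Grothendieck filtration degree of $D$: for $a \in \mathcal{O}(Y)$ one has $[D|_{\mathcal{O}(Y)}, a] = [D, a]|_{\mathcal{O}(Y)}$, and $[D,a]$ is again $G$-invariant and of strictly lower filtration order, so by induction it restricts to an element of $\mathcal{D}(Y)$.

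For the opposite direction I would invoke étale descent of differential operators. Since $\pi$ is étale, the natural map
\[ \mathcal{D}(Y) \otimes_{\mathcal{O}(Y)} \mathcal{O}(X) \,\longrightarrow\, \mathcal{D}(X) \]
is an isomorphism of $(\mathcal{D}(Y),\mathcal{O}(X))$-bimodules, and under this identification the $G$-action on $\mathcal{D}(X)$ corresponds to the action on the right tensor factor alone (since $\mathcal{D}(Y)$ itself carries no $G$-action). Taking $G$-invariants then yields
\[ \mathcal{D}(X)^G \,\cong\, \mathcal{D}(Y) \otimes_{\mathcal{O}(Y)} \mathcal{O}(X)^G \,=\, \mathcal{D}(Y), \]
and a short check shows this isomorphism inverts $\rho$ and respects the multiplicative structure, delivering the desired ring isomorphism.

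The main obstacle is justifying the étale descent formula $\mathcal{D}(X) \cong \mathcal{D}(Y) \otimes_{\mathcal{O}(Y)} \mathcal{O}(X)$. In the characteristic-zero, smooth setting this is classical, following from the presentation of $\mathcal{D}$ by $\mathcal{O}$ and $\mathrm{Der}_\k$ together with the unique lifting of $\k$-derivations along an étale map. A more hands-on alternative that bypasses this descent statement is to construct the inverse of $\rho$ directly: for $D' \in \mathcal{D}(Y)$, use étaleness (locally, via the trivialization $\mathcal{O}(X) \otimes_{\mathcal{O}(Y)} \mathcal{O}(X) \cong \prod_G \mathcal{O}(X)$) to produce a unique lift $\widetilde{D}' \in \mathcal{D}(X)$, observe that this lift is tautologically $G$-invariant, and verify by induction on filtration degree that $\rho$ and the assignment $D' \mapsto \widetilde{D}'$ are mutually inverse ring maps.
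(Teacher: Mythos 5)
Your argument is essentially sound, but it is not the paper's: the paper offers no proof of Theorem \ref{free0} at all, simply citing \cite[Theorem 3.7(1)]{CH}. What you propose --- freeness of the finite action makes $\pi\colon X\to Y:=X/G$ a finite étale Galois cover, differential operators descend along étale maps via $\mathcal{D}(X)\cong\mathcal{O}(X)\otimes_{\mathcal{O}(Y)}\mathcal{D}(Y)$, and taking $G$-invariants (which, by uniqueness of lifts, act on the $\mathcal{O}(X)$ factor alone) collapses that factor to $\mathcal{O}(Y)$ --- is in fact the same strategy the paper itself uses later for the prime-characteristic analogue, Theorem \ref{free}, where étaleness of $\pi$, the isomorphism $d\pi\colon T_X\to\pi^*T_{X/G}$, and the PBW/filtration comparison do the work; so your route is a legitimate stand-in for the omitted proof, and arguably more self-contained than the bare citation. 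Two points should be made explicit. First, the step $\bigl(\mathcal{D}(Y)\otimes_{\mathcal{O}(Y)}\mathcal{O}(X)\bigr)^G=\mathcal{D}(Y)\otimes_{\mathcal{O}(Y)}\mathcal{O}(X)^G$ is not formal: it uses finiteness of $G$ and the Reynolds operator (so $|G|$ invertible, harmless since this theorem sits in the characteristic-zero part of the paper), via the $\mathcal{O}(Y)[G]$-module splitting $\mathcal{O}(X)=\mathcal{O}(Y)\oplus\ker e$. Second, your justification of the descent isomorphism through the presentation of $\mathcal{D}$ by $\mathcal{O}$ and $\mathrm{Der}_{\k}$ requires $X$ (hence $Y$) smooth, whereas the statement, and the Cannings--Holland theorem it cites, allow arbitrary varieties; this suffices for the paper's application (an affine open subset of $\mathbb{A}^n$), and your alternative via the torsor trivialization $\mathcal{O}(X)\otimes_{\mathcal{O}(Y)}\mathcal{O}(X)\cong\prod_G\mathcal{O}(X)$ (or flat base change for the modules of principal parts) would remove the smoothness hypothesis, but as written that alternative is only a sketch and its induction on order is not carried out.
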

\begin{proof}
    \cite[Theorem 3.7(1)]{CH}
\end{proof}

Now we return to our original situation. We have a group $G$ of birational automorphisms of $\mathbb{A}^n$ such that $\k(x_1,\ldots,x_n)^G \simeq \k(x_1,\ldots, x_n)$.

By Lemmas \ref{main-lemma} and \ref{blaster-lemma}, there is an open affine subset $U \subset \mathbb{A}^n$ such that $G|_U$ acts freely by biregular automorphisms on $U$, and $U/G$ is rational.

We have $F_n(\k)^G= Frac \, \mathcal{D}(U)^G$. By Theorem \ref{free0}, $Frac \, \mathcal{D}(U)^G \simeq Frac \, \mathcal{D}(U/G)$, which by Lemma \ref{step-1}, is isomorphic to $F_n(\k)$. Hence Theorem \ref{new-1} is proved.

As promised, we now give a simpler proof for Theorem \ref{new-2}. By hypothesis, we have a finite group $G$ of biregular automorphisms of the torus $\k^{\times n}$, such that $\k^{\times n}/G$ is rational. By \cite[Theorem 1.2]{FS}, $Frac \, \mathcal{D}(\k^{\times n})^G=F_n(\k)^G \simeq F_n(\k)$.
\subsection{Rationality for rings of differential operators}

Let's prove items 1, 2 and 3 of Theorem \ref{new-7}.

First we recall a well-known fact about rings of differential operators, which can be found, for instance, in \cite{Muhasky}

\begin{proposition}\label{proposition-Muhasky}
Let $A$ a finitely generated algebra which is also a domain. Let $S$ be a multiplicatively closed subset of $A$. Then the localizations of $\mathcal{D}(A)$ in the left and on the righ by $S$ exists, and they are isomorphic to $\mathcal{D}(A)_S$
    
\end{proposition}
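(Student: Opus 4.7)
The plan is to establish the Ore condition for $S$ inside $\mathcal{D}(A)$, then upgrade the canonical extension map $\mathcal{D}(A) \to \mathcal{D}(A_S)$ to an isomorphism of localizations.

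First, I would verify the Ore condition by induction on the order of a differential operator. Given $\theta \in \mathcal{D}_i(A)$ and $s \in S$, the commutator $[\theta, s]$ lies in $\mathcal{D}_{i-1}(A)$ by definition of the order filtration. Iterating this observation, one expands $[\theta, s^N]$ as a telescoping sum whose terms have strictly decreasing order, and for $N$ large enough (essentially $N > i$) one obtains a relation of the form $\theta \cdot s^N = s \cdot \theta'$ for some $\theta' \in \mathcal{D}(A)$; a symmetric computation produces the other-sided condition. Because $A$ is a domain, each $s \in S$ acts on $A$ without annihilator and is therefore a regular element of $\mathcal{D}(A)$, so both the left and right Ore localizations $\mathcal{D}(A)_S$ exist.

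Second, I would construct a natural algebra map $\varphi: \mathcal{D}(A) \to \mathcal{D}(A_S)$ sending each $\theta \in \mathcal{D}_i(A)$ to its canonical extension to the localization $A_S$. This extension exists and is unique by induction on $i$: for $i = 0$ it is the universal property of localization, and for higher $i$ the action on $a/s$ is forced by the identity $\theta(a) = s \cdot \theta(a/s) + [\theta, s](a/s)$, using that $[\theta, s]$ has already been extended. Since every $s \in S$ becomes a unit in $A_S$ under left multiplication, $\varphi(s)$ is invertible in $\mathcal{D}(A_S)$, and the universal property of Ore localization yields an induced homomorphism $\tilde\varphi: \mathcal{D}(A)_S \to \mathcal{D}(A_S)$.

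Finally, I would show that $\tilde\varphi$ is bijective. Injectivity is immediate: if $s^{-1}\theta$ maps to zero, then $\theta$ annihilates all of $A_S$ and hence all of $A$, forcing $\theta = 0$. Surjectivity is the main obstacle and is where finite generation of $A$ is used. Given $\Theta \in \mathcal{D}_i(A_S)$, I would prove by induction on $i$ that for a sufficiently large power $s^N$, depending on a finite generating set $a_1, \dots, a_m$ of $A$, the operator $s^N \cdot \Theta$ carries $A$ into $A$ and lies in $\mathcal{D}_i(A)$. The key point is that the denominators appearing in $\Theta(a_j)$ can be bounded uniformly by combining the induction hypothesis applied to the lower-order operator $[\Theta, s] \in \mathcal{D}_{i-1}(A_S)$ with the finiteness of the generating set. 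Once this is done, the identity $\Theta = s^{-N}(s^N \Theta)$ exhibits $\Theta$ in the image of $\tilde\varphi$, completing the proof.
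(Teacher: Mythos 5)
The paper does not actually prove this proposition: it is invoked as a known fact with a pointer to Muhasky's paper (Proposition 1.8 there), so there is no internal argument to compare against. Your proposal is, in substance, a correct reconstruction of the standard proof behind that citation: the Ore condition via ad-nilpotence of multiplication operators ($\mathrm{ad}(s)^{i+1}$ kills $\mathcal{D}_i(A)$, whence $\theta s^{N}\in s\,\mathcal{D}(A)$ for $N>i$, and symmetrically), the canonical extension map $\mathcal{D}(A)\to\mathcal{D}(A_S)$ built by induction on order, and surjectivity by clearing denominators using finite generation of $A$ together with induction on the order. This is exactly the argument the paper is implicitly relying on, so your route is the same as the cited source rather than a genuinely different one.

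Three points should be tightened. (i) Regularity of $s$ in $\mathcal{D}(A)$ on the right is not just ``$s$ has no annihilator in $A$'': if $\theta s=0$ then $\theta$ kills the ideal $sA$, and you need the short inductive lemma that a differential operator vanishing on a nonzero ideal of a domain is zero; alternatively, in the characteristic-zero setting of this section $\mathcal{D}(A)$ embeds in $\mathcal{D}(\mathrm{Frac}\,A)$ and is a domain, which settles regularity at once. (ii) In the surjectivity step the operators to which the order induction applies are the commutators $[\Theta,a_j]$ with the \emph{algebra generators}, not $[\Theta,s]$: finding a common denominator for the finitely many $\Theta(a_j)$ is immediate, and the real content is propagating $t\Theta(A)\subseteq A$ from generators to arbitrary monomials via $\Theta(a_j x)=a_j\Theta(x)+[\Theta,a_j](x)$, an inner induction on monomial degree; moreover the element $t$ is a product of finitely many elements of $S$ rather than a power of a single $s$, which is harmless since $S$ is multiplicatively closed. (iii) To conclude $\Theta=t^{-1}\varphi\bigl((t\Theta)|_A\bigr)$ you should also record that an element of $\mathcal{D}(A_S)$ is determined by its restriction to $A$ (again a quick induction on order), so that $t\Theta$ really is the canonical extension of its restriction. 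With these repairs your sketch is a complete proof of the statement the paper only cites.
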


\begin{lemma}\label{obvious-lemma}

Let $A$ be an Ore domain and $S$ any denominator set on $A$. $Frac \, A= Frac \, A_S$.
\end{lemma}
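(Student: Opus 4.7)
The plan is to prove the lemma by exhibiting $\Frac A$ and $\Frac A_S$ as the same division ring via mutual inclusions, using the universal property of Ore localization. The statement is folklore, so the proof is essentially a matter of assembling the right observations in order.

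First I would observe that, since $A$ is an Ore domain, the elements of $S$ are non-zero and hence become invertible in $\Frac A$. By the universal property of Ore localization, the canonical map $A \to \Frac A$ therefore factors through $A_S$, giving a ring embedding $A_S \hookrightarrow \Frac A$. So we have a chain $A \subseteq A_S \subseteq \Frac A$, and every non-zero element of $A_S$ is invertible inside the division ring $\Frac A$.

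Next I would check that $A_S$ is itself an Ore domain, so that $\Frac A_S$ makes sense. Being a subring of the division ring $\Frac A$ that is closed under the Ore denominator-set construction, $A_S$ is a domain; and for any two non-zero $x,y \in A_S \subseteq \Frac A$, writing $x = a_1 s_1^{-1}$, $y = a_2 s_2^{-1}$ inside $\Frac A$ and clearing the $A$-denominators using the Ore condition on $A$, one produces a common left (resp.\ right) $A_S$-multiple, which gives the Ore condition for $A_S$. Thus $\Frac A_S$ exists.

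Finally I would apply the universal property of the division ring of fractions twice. Since $A_S \hookrightarrow \Frac A$ lands in a division ring, the universal property of $\Frac A_S$ produces an injective ring map $\Frac A_S \hookrightarrow \Frac A$ extending the inclusion. Conversely, the composition $A \hookrightarrow A_S \hookrightarrow \Frac A_S$ realizes $\Frac A_S$ as a division ring containing $A$, so the universal property of $\Frac A$ yields an embedding $\Frac A \hookrightarrow \Frac A_S$. These two maps are mutually inverse on the copy of $A$, hence on all of $\Frac A$, giving $\Frac A \simeq \Frac A_S$.

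The only real subtlety is verifying that $A_S$ inherits the Ore condition from $A$; everything else is a direct application of universal properties. I expect this Ore-condition check to be the main (still routine) obstacle, and it can alternatively be handled by citing a standard reference such as \cite{McConnell} instead of writing out the denominator manipulation.
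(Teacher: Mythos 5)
Your argument is correct. Note, however, that the paper gives no proof of this lemma at all: it is treated as a standard ``obvious'' fact (the proof environment that follows it in the source is actually the proof of item 1 of Theorem \ref{new-7}), so your write-up supplies a justification the paper omits rather than paralleling one. Two small remarks on your route. First, you implicitly assume $0\notin S$ (otherwise $A_S=0$); this is the standard convention for denominator sets and is satisfied in the paper's applications, but it is worth stating. Second, your separate verification that $A_S$ is Ore, followed by two applications of universal properties, can be compressed: once you have $A\subseteq A_S\subseteq \Frac\,A$, every element of $\Frac\,A$ is already a fraction $as^{-1}$ with $a,s\in A\subseteq A_S$, so $\Frac\,A$ is itself a classical ring of fractions of $A_S$; by the characterization of the Ore quotient ring this forces $A_S$ to be an Ore domain with $\Frac\,A_S=\Frac\,A$ in one step, with no need to check the Ore condition in $A_S$ by hand or to construct mutually inverse maps. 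Either way the conclusion stands, and your final uniqueness step (maps agreeing on $A$ agree on $\Frac\,A$) is justified because every element of $\Frac\,A$ is a ratio of elements of $A$.
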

\begin{proof}
    \textbf{Proof of item 1}

    If $X$ is rational, call by $S$ its set of non-null elements, and by $A$ its ring of regular functions. Clearly $A_S \simeq \k(x_1,\ldots, x_n)$.
    , where $n$ is the Krull dimension of $A$. $Frac \, \mathcal{D}(A) = Frac \, \mathcal{D}(A)_S = Frac \, \mathcal{D}(A_S) = Frac \, \mathcal{D}(\k(x_1, \ldots, x_n)) = Frac \, B_n(\k)= Frac \, F_n(\k)$. 
\end{proof}

Let's know prove item 3.

\begin{proof}
    \textbf{Proof of item 3} If $X$ is unirational, we have an embedding of its function field $F$ into $\k(x_1,\ldots,x_n)$, where $n \geq dim X$ by transcendence degree considerations. Taking differential operators preserve the embedding $\mathcal{D}(F) \subset B_n(\k)$.

     Using lemma \ref{obvious-lemma},  $Frac \, \mathcal{D}(X) = Frac \, \mathcal{D}(F) \subset F_n(\k)$, as taking the skew-field of fractions preserves embeddings. Finally, if $Frac \, \mathcal{D}(X)$ embedds in some $F_n(\k)$, by \cite[Theorem 10]{FSS}, $dim \, X \leq n$.
\end{proof}

The hardest one to prove is the second item. So we will need some preparation.

\begin{proposition}
Let $A$ be an affine integral domain. There exists an element $c$ in $A$ such that localization $A_c$ is regular
\end{proposition}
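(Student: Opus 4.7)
The plan is to deduce the existence of $c$ from the openness of the regular locus $\mathrm{Reg}(\Spec A) \subset \Spec A$. First I would verify non-emptiness of the regular locus: since $A$ is a domain, the generic point $(0) \in \Spec A$ has local ring $\Frac(A)$, which is a field and hence a regular local ring, so at the very least the generic point is regular. Next I would establish that the regular locus is open. Since $A$ is of finite type over the field $\k$, which is algebraically closed (hence perfect) by the standing hypothesis of this section, this is a classical consequence of the Jacobian criterion: writing $A = \k[x_1,\dots,x_m]/\mathfrak{p}$ with $\mathfrak{p}$ a prime ideal of height $h = m - \dim A$, the singular locus of $\Spec A$ is the closed subset cut out by the vanishing of the $h \times h$ minors of the Jacobian matrix of any finite generating set of $\mathfrak{p}$. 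Alternatively, one may invoke that finitely generated algebras over a field are excellent rings, and that the regular locus of any excellent ring is open.

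With both facts in hand, $\mathrm{Reg}(\Spec A)$ is a non-empty open subset of the irreducible space $\Spec A$, and so contains some basic open subset $D(c)$ with $0 \neq c \in A$. Such a $c$ does the job: every prime of $A_c$ is of the form $\mathfrak{q} A_c$ for some $\mathfrak{q} \in D(c)$, and $(A_c)_{\mathfrak{q} A_c} = A_{\mathfrak{q}}$ is regular by the choice of $D(c)$, so $A_c$ is a regular ring.

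The only genuine obstacle here is justifying openness of the regular locus, which while classical is the one nontrivial input; given the setting ($\k$ algebraically closed and $A$ finitely generated over $\k$), either the Jacobian criterion or the excellence of $A$ settles it with minimal effort. Everything else in the argument is purely formal.
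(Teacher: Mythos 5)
Your argument is correct. Note, though, that the paper does not actually give an argument here: its ``proof'' is just a citation of \cite[15.2.10]{McConnell}, which is precisely this generic-regularity statement for affine domains. What you have written is, in effect, the standard proof lying behind that citation: non-emptiness of the regular locus because the local ring at the generic point is the field $\Frac(A)$, openness of the regular locus because $A$ is of finite type over the (algebraically closed, hence perfect) base field --- via the Jacobian criterion or via excellence --- and then the purely formal step of choosing a basic open $D(c)$ inside the regular locus and observing that every localization of $A_c$ at a prime is a localization of $A$ at a prime in $D(c)$. The two nontrivial inputs are exactly where you place them, and your use of perfectness of $\k$ to identify the smooth and regular loci is the right point of care (over a non-perfect field one would fall back on excellence). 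So the only difference from the paper is that you supply a self-contained proof where the paper defers to the literature; both are fine, and yours has the advantage of making explicit which hypotheses (finite type over a perfect field, domain) are actually used.
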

\begin{proof}
    \cite[15.2.10]{McConnell}

\end{proof}

Let $X$ be an affine variety, and $A=\mathcal{O}(X)$. Let $c$ be one of the elements $c$ of the previous proposition such that $A_c$ is regular. I will denote by $X_c = Spec \ A_c$; $X_c$ is a smooth open affine subvariety of $X$.

\begin{proof}

\textbf{Proof of item 2}
    Let $ c \in \mathcal{O}(X)$ be such that $X_c$ is smooth. First notice that we can localize $\mathcal{D}(X) \otimes A_n(\k)$ by $c \otimes 1$ beucase this element act ad-nilpotently \cite[Thm 4.9]{KL}. So $\mathcal{D}(X_c) \otimes A_n(\k)$ is a localization of of $\mathcal{D}(X) \otimes A_n(\k)$. Suppose that $X \times \mathbb{A}^n$ is birational, say, to $\mathbb{A}^m$.; then the same holds for $X_c \otimes \mathbb{A}^n$. By transcendence degree considerations, $dim \, X + n = m$. Both $X_c$ and $\mathbb{A}^n$ are smooth affine varieties with finite Krull dimension, so using \cite[Lemma 2.5]{BO}, we have that $\mathcal{D}(X_c) \otimes A_n(\k)$ is isomorphic to $\mathcal{D}(X_c \times \mathbb{A}^n)$. Using Theorem \ref{new-7} item 1, we obtain that $ Frac \, \mathcal{D}(X_c \times \mathbb{A}^n)=F_m(\k)$. In fact, $m$ must be $dim \, X + n$, by \cite[Theorem 8]{FSS}.
\end{proof}

\section{The situation in prime
characetristic}

In this section $\k$ is an algebraically closed field of prime characteristic.

\begin{proposition}\label{localization}
Let $X$ and $Y$ be smooth affine varieties with $\k(X) \simeq \k(Y)$. Then $Frac \, \mathcal{D}_c(X) \simeq Frac \, \mathcal{D}_c(Y)$.
\end{proposition}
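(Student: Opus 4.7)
The plan is to deduce the result from a birational description together with a compatibility statement between localization and the crystalline differential operator functor, mirroring the way Proposition \ref{proposition-Muhasky} is used in characteristic zero.

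\medskip

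First, I would invoke the hypothesis $\k(X) \simeq \k(Y)$ to extract a common open affine model. Since $X$ and $Y$ are smooth affine and birationally equivalent, there exist nonzero elements $f \in \mathcal{O}(X)$ and $g \in \mathcal{O}(Y)$ such that the principal open subsets $X_f = \mathrm{Spec}\,\mathcal{O}(X)_f$ and $Y_g = \mathrm{Spec}\,\mathcal{O}(Y)_g$ are isomorphic as varieties; both are automatically smooth affine. In particular $\mathcal{D}_c(X_f) \simeq \mathcal{D}_c(Y_g)$ canonically.

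\medskip

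The second step, and the technical heart of the argument, is to establish the localization identity $\mathcal{D}_c(X)_f \simeq \mathcal{D}_c(X_f)$ for any nonzero $f \in \mathcal{O}(X)$, and likewise for $Y$. To verify that the Ore localization at $S = \{1,f,f^2,\ldots\}$ exists inside $\mathcal{D}_c(X)$, I would use the good filtration of Proposition \ref{filtered}: $\mathrm{gr}\,\mathcal{D}_c(X) = \mathcal{O}(T^{*}X)$ is a commutative Noetherian domain, so $f \in \mathcal{D}_c^0(X)$ satisfies $[f,\mathcal{D}_c^i(X)] \subseteq \mathcal{D}_c^{i-1}(X)$, i.e.\ $\mathrm{ad}(f)$ is locally nilpotent, which guarantees the Ore condition. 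To identify this localization with $\mathcal{D}_c(X_f)$, I would use the generators-and-relations description: the inclusion $\mathcal{O}(X) \hookrightarrow \mathcal{O}(X)_f$ and the canonical extension of derivations $\mathrm{Der}_\k\mathcal{O}(X) \to \mathrm{Der}_\k\mathcal{O}(X)_f$ (both preserving the defining relations of $\mathcal{D}_c$) yield an algebra map $\mathcal{D}_c(X) \to \mathcal{D}_c(X_f)$ that sends $f$ to a unit and therefore factors through $\mathcal{D}_c(X)_f \to \mathcal{D}_c(X_f)$. Surjectivity follows from the standard fact that every $\k$-derivation of $\mathcal{O}(X)_f$ is of the form $\partial/f^n$ for some $\partial \in \mathrm{Der}_\k\mathcal{O}(X)$, so the generators of $\mathcal{D}_c(X_f)$ lie in the image. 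Injectivity, and in fact bijectivity, can be read off from the associated graded: both sides carry the induced filtration and their $\mathrm{gr}$'s agree with $\mathcal{O}(T^{*}X)_f = \mathcal{O}(T^{*}X_f)$, so a filtered map inducing an isomorphism on $\mathrm{gr}$ is an isomorphism.

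\medskip

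With Step 2 in hand, the proof concludes quickly. Since $\mathcal{D}_c(X)$ is an Ore domain, Lemma \ref{obvious-lemma} gives $\mathrm{Frac}\,\mathcal{D}_c(X) = \mathrm{Frac}\,\mathcal{D}_c(X)_f$, and the analogous statement for $Y$. Chaining the isomorphisms,
\[
\mathrm{Frac}\,\mathcal{D}_c(X) = \mathrm{Frac}\,\mathcal{D}_c(X)_f \simeq \mathrm{Frac}\,\mathcal{D}_c(X_f) \simeq \mathrm{Frac}\,\mathcal{D}_c(Y_g) \simeq \mathrm{Frac}\,\mathcal{D}_c(Y)_g = \mathrm{Frac}\,\mathcal{D}_c(Y),
\]
which is the desired conclusion.

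\medskip

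The main obstacle is the localization compatibility in Step 2, because in prime characteristic the theory of differential operators is substantially more delicate than in characteristic zero; for the Grothendieck version the identity $\mathcal{D}(X)_f \simeq \mathcal{D}(X_f)$ can fail for non-smooth or non-finitely-generated $X$. The smoothness hypothesis and the explicit generators-and-relations presentation of $\mathcal{D}_c$ are precisely what make the argument go through, and all other steps are essentially formal once this compatibility is available.
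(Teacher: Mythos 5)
Your argument is correct and follows essentially the same route as the paper: pass to a common affine open model of the birational equivalence and use that $\mathcal{D}_c$ is compatible with localization, so that skew fields of fractions are unchanged. The only difference is one of detail --- the paper simply cites \cite{BMR} for the localization compatibility (and the characteristic-zero template in McConnell--Robson), whereas you prove it directly via the Ore condition for powers of $f$ and the associated graded comparison $\mathrm{gr}\,\mathcal{D}_c(X)_f \simeq \mathcal{O}(T^*X)_f \simeq \mathrm{gr}\,\mathcal{D}_c(X_f)$, which is a perfectly sound way to fill in that step.
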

\begin{proof}
 Analogous to the case of zero characteristic (see remark below). The ring of crystalline differential operators sheafifies \cite{BMR}; that is, just as usual rings of differential operators, it is compatible with localization.
\end{proof}

\begin{remark}
For characteristic $0$ and usual rings of differential operators the above is \cite[15.1.25]{McConnell}.    
\end{remark}

If $G$ is a finite group of automorphisms of a smooth variety $X$ then we can define an action on $\mathcal{D}_c(X)$ by defining the action on the generators as follows:

\[ g.f=g(f), \, g.\partial=g \circ \partial \circ g^{-1}, g \in G, f \in \mathcal{O}(X), \partial \in Der_\k \mathcal{O}(X).\]

\begin{theorem}\label{free}
Let $X$ be an smooth affine variety and $G$ be a finite groups of automorphisms of $X$ that acts freely. Then

\[\mathcal{D}_c(X)^G \simeq \mathcal{D}_c(X/G)\]
\end{theorem}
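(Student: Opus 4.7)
The plan is to exploit the fact that a finite free action on a smooth variety in positive characteristic yields an étale Galois cover $\pi: X \to X/G$: the quotient $X/G$ is smooth and $A := \mathcal{O}(X)$ becomes a finitely generated projective $B := \mathcal{O}(X/G) = A^G$-module of rank $|G|$. My strategy is to construct a canonical algebra map $\mathcal{D}_c(X/G) \to \mathcal{D}_c(X)^G$, extend it $A$-linearly to an isomorphism $A \otimes_B \mathcal{D}_c(X/G) \simeq \mathcal{D}_c(X)$, and finally take $G$-invariants using the Galois property of $\pi$. This translates to positive characteristic the classical argument behind Theorem \ref{free0}, with ``derivations lift uniquely along étale maps'' replacing any appeal to $|G|$ being invertible in $\k$.

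The first key ingredient is étale base change for derivations: since $\pi$ is étale, $\Omega_{A/B} = 0$, so every $\partial \in \Der_\k B$ lifts uniquely to a $\tilde\partial \in \Der_\k A$, and the natural $A$-module map $A \otimes_B \Der_\k B \to \Der_\k A$, $a \otimes \partial \mapsto a\tilde\partial$, is an isomorphism. Uniqueness of the lift forces $g \cdot \tilde\partial = \tilde\partial$ for every $g \in G$, since both sides restrict to $\partial$ on $B$. Sending $b \mapsto b$ and $\partial \mapsto \tilde\partial$ therefore respects the defining relations of $\mathcal{D}_c$ and yields an algebra homomorphism $\Phi: \mathcal{D}_c(X/G) \to \mathcal{D}_c(X)^G$. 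Extending $A$-linearly gives $\Psi: A \otimes_B \mathcal{D}_c(X/G) \to \mathcal{D}_c(X)$, whose bijectivity I would verify by passing to the associated graded: by Proposition \ref{filtered} together with étale base change, $\mathrm{gr}\,\Psi$ becomes the canonical identification $\mathrm{Sym}_A(\Der_\k A) \simeq A \otimes_B \mathrm{Sym}_B(\Der_\k B)$, and flatness of $A$ over $B$ promotes this to an isomorphism of filtered modules.

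To conclude, under $\Psi$ the $G$-action on $\mathcal{D}_c(X)$ involves only the first tensor factor $A$, since the lifts $\tilde\partial$ are $G$-fixed. Taking invariants then gives $\mathcal{D}_c(X)^G \simeq (A \otimes_B \mathcal{D}_c(X/G))^G$, and the main remaining obstacle is justifying that $(A \otimes_B N)^G \simeq A^G \otimes_B N = N$ for every $B$-module $N$ carrying the trivial $G$-action. This is precisely where the étale Galois condition does its work: $A$ is étale-locally free of rank one as a $B[G]$-module, so invariants behave as if $A \simeq B[G]$, and one computes $(B[G] \otimes_B N)^G = N$ directly and then descends. I expect this invariants--tensor step to be the main technical point; everything else is a formal consequence of the étale Galois setup once the derivation-extension result is in hand.
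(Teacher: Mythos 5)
Your proposal is correct and follows essentially the same route as the paper's proof: freeness makes $\pi\colon X\to X/G$ étale, so $T_X\simeq \pi^* T_{X/G}$ (your unique lifting of derivations is exactly this), and the PBW filtration of Proposition \ref{filtered} identifies $\mathcal{D}_c(X)$ with $\mathcal{O}(X)\otimes_{\mathcal{O}(X/G)}\mathcal{D}_c(X/G)$ compatibly with the $G$-action. The only substantive difference is that you make explicit the final step $(A\otimes_B N)^G\simeq N$ via Galois descent for the free (Galois, étale) extension $B\subset A$ --- a point that is genuinely needed in positive characteristic, since $|G|$ need not be invertible, and that the paper's proof compresses into ``the result follows.''
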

\begin{proof}
 Since $G$ acts freely on $X$, the projection $\pi: X \rightarrow X/G$ is étale (\cite{Mumford}, $\oint$II.7). Hence the induced map on the tangent bundles $d \pi: T_X \rightarrow \pi^* T_{X/G}$ is an isomoprhism. Hence we also have that $Sym_{\mathcal{O}(X)} T_X \simeq \pi^* Sym_{\mathcal{O}(X/G)} T_{X/G}$ and clearly we have $\mathcal{O}(X)^G \simeq \mathcal{O}(X/G)$. By the PBW theorem (which is just Proposition \ref{filtered}), the result follows.
\end{proof}

\begin{remark}
Notice that the analogue result holds in characteristic zero: Theorem \ref{free0}.    
\end{remark}

\textbf{Proof of Theorem} \ref{new-3}
Since $G$ is finite, there is an open subset $U$ of $X$ such that the restriction of the action of $G$ to $U$ is free. Without loss of generality, we can assume $U$ affine.

Hence by Theorem \ref{free}:

\[ (1) \, \mathcal{D}_c(U)^G \simeq \mathcal{D}_c(U/G) .\]

Now, $\k(X)=\k(U)$, hence by Proposition \ref{localization} $(2) \, Frac \mathcal{D}_c(X)^G=Frac \, \mathcal{D}_c(U)^G$. Also $\k(U/G)=k(Y)$ (as $X/G$ and so $U/G$ is birationally equivalent to $Y$). So again by Proposition \ref{localization} $(3) \, Frac \, \mathcal{D}_c(U/G) \simeq Frac \, \mathcal{D}_c(Y)$. Combining (1), (2), (3) we have
\[Frac \, \mathcal{D}_c(X)^G \simeq Frac \, \mathcal{D}_c(Y) \] as desired.

I repeat now the corollary that says, essentially: Noethers problem implies noncommutative Noether's problem in prime characteristc
\begin{corollary}
    Let $G$ be a finite group of automorphisms of $\k[x_1, \ldots, x_n]$. If $\k(x_1, \ldots, x_n)^G$ is a purely transcendental extension, then $F_n(\k)^G \simeq F_n(\k)$
\end{corollary}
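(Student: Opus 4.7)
The plan is to apply Theorem \ref{new-3} directly, specialized to $X = Y = \mathbb{A}^n$. First I would note that $\mathbb{A}^n$ is a smooth affine variety and that $\mathcal{D}_c(\mathbb{A}^n) \simeq A_n(\k)$, so $Frac \, \mathcal{D}_c(\mathbb{A}^n) = F_n(\k)$. The action of $G$ on $\k[x_1,\ldots,x_n] = \mathcal{O}(\mathbb{A}^n)$ extends naturally to an action on $\mathcal{D}_c(\mathbb{A}^n) = A_n(\k)$, and hence on $F_n(\k)$, via the formulas $g \cdot f = g(f)$, $g \cdot \partial = g \circ \partial \circ g^{-1}$ introduced just above Theorem \ref{free}.

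Next I would verify the birational hypothesis of Theorem \ref{new-3}. The hypothesis that $\k(x_1,\ldots,x_n)^G$ is purely transcendental over $\k$ means $\k(\mathbb{A}^n/G) = \k(x_1,\ldots,x_n)^G \simeq \k(y_1,\ldots,y_r)$ for some $r$; since $\k(x_1,\ldots,x_n)$ is a finite (hence algebraic) extension of its $G$-invariant subfield, the transcendence degree must be preserved and $r = n$. Therefore $\mathbb{A}^n/G$ is birationally equivalent to $Y = \mathbb{A}^n$, which is smooth and affine. All hypotheses of Theorem \ref{new-3} are met, and it delivers
\[Frac \, \mathcal{D}_c(\mathbb{A}^n)^G \simeq Frac \, \mathcal{D}_c(\mathbb{A}^n) = F_n(\k).\]

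The last step is to identify the left-hand side with $F_n(\k)^G$. Since $A_n(\k)$ is a Noetherian Ore domain and $G$ is finite, the fixed ring $A_n(\k)^G$ is again an Ore domain, and a routine denominator-clearing argument (given $a/s \in F_n(\k)^G$, multiply numerator and denominator by $\prod_{g \neq 1} g(s)$ to get an invariant denominator) yields $F_n(\k)^G = Frac \, A_n(\k)^G = Frac \, \mathcal{D}_c(\mathbb{A}^n)^G$. Combining with the displayed isomorphism gives $F_n(\k)^G \simeq F_n(\k)$, which is the corollary. There is essentially no obstacle, since this result is a direct specialization of Theorem \ref{new-3}; the only non-cosmetic detail is the folklore identity $Frac(A)^G = Frac(A^G)$ for finite actions on Ore domains, which is entirely elementary.
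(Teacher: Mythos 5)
Your route coincides with the paper's: the corollary is given there without a separate argument precisely because it is Theorem \ref{new-3} specialized to $X=Y=\mathbb{A}^n$, using $\mathcal{D}_c(\mathbb{A}^n)\simeq A_n(\k)$ and exactly the transcendence-degree observation you make to see that $\mathbb{A}^n/G$ is birational to $\mathbb{A}^n$; that part of your proposal is fine and is the intended proof.

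The only point needing care is your final step. If the conclusion of Theorem \ref{new-3} is read as $(Frac\,\mathcal{D}_c(X))^G\simeq Frac\,\mathcal{D}_c(Y)$ --- which is how the paper uses it --- then nothing more is needed and your last paragraph is superfluous. If instead it is read as $Frac\,(\mathcal{D}_c(X)^G)$, then you do need the identity $Frac\,(A_n(\k)^G)=F_n(\k)^G$, but your justification of it is not valid as written: in a noncommutative domain the element $\prod_{g\neq 1}g(s)$ neither commutes with $s$ nor is it $G$-invariant (the factors do not commute, so the product over the orbit is not fixed by $G$), so the commutative denominator-clearing trick does not transfer verbatim. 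The identity itself is true, but it requires a genuine argument: either cite the standard results on fixed rings and quotient rings of Ore domains (Faith, Montgomery), or use a characteristic-$p$ argument exploiting that $A_n(\k)$ is Azumaya over its center $\k[x_1^p,\ldots,x_n^p,y_1^p,\ldots,y_n^p]$. For instance, for $q=as^{-1}\in F_n(\k)^G$ the reduced norm satisfies $\mathrm{Nrd}(s)=su$ with $u\in A_n(\k)$ and $\mathrm{Nrd}(s)$ central, so $c=\prod_{g\in G}g(\mathrm{Nrd}(s))$ is a nonzero central $G$-invariant element with $qc\in A_n(\k)$, and $q=(qc)c^{-1}$ is a presentation with both entries in $A_n(\k)^G$. (The same identification is implicitly used when the paper combines steps (1)--(3) in the proof of Theorem \ref{new-3}, so the issue is worth spelling out, but it is a repairable lemma, not a flaw in your overall approach.)
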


\begin{remark}
    We did not impose that $G$ acts linearly.
\end{remark}

\subsection{Rational Cherednik algebras in prime characterisc}
The theory of rational Cherednik algebras over fields of prime characteristic parallel the one over $\mathbb{C}$ developed in \cite{EG}. We follow \cite{BC}.

Let $\k$ be an algebraically closed field of odd characteristic. Let $(V,\omega)$ be an even dimensional vector space $V$ with a non-degenerated sympletic form $\omega$. A finite subgroup of $SP(V)$ is called a symplectic reflection group if it is generated by symplectic reflections, which are symplectic isomorphisms $g$ such that $rank \, 1-g=2$.

Let $\Gamma$ be a sympectic reflection group, $S$ the set of symplectic reflections, $c: S \rightarrow \k$ invariant under conjugation. Let $t \in \k$. We allways assume 

\[ 2|\Gamma| \in \k^\times .\]

The symplectic reflection algebra $H_{t,c}$ is the quotient of $T(V)*\Gamma$ by the relations

\[ [x,y]=t\omega(x,y)-\sum_{s \in S} c(s) \omega_s(x,y), \]

where $x,y \in V$ and $\omega_s$ is the skew-symmetric form with radical $ker(I-s)$ and coincides with $\omega$ in $Im(I-s)$.

Let $W$ be a pseudo-reflection group with representation $h$. $W$ acts on $V=h \oplus h^*$, which has a $W$-invariant symplectic form $\omega((u,f),(x,g))=g(u)-f(x)$. $W$ becomes in this way a symplectic reflection group, with the set of pseudo-reflections corresponding to the symplectic reflections. Let $0 \neq t \in \k$, $c: S \rightarrow \k$ invariant under conjugation. The symplectic reflection algebra $H_{t,c}$ is then called a rational Cherednik algebra and denoted $H_{t,c}(h,W)$.

Let $e=1/|W| \sum_{w \in W}$ be an indempotent in $H_{t,c}(h,W)$. The spherical subalgebra $U_{t,c}(h,W)$ is $eH_{t,c}(h,W)e$ (with unit $e$).

\begin{proposition}\label{rat-alg-p}
The spherical subalgebra is a finitely generated Noetherian domain \cite[Theorem 3.1]{BC}.
\end{proposition}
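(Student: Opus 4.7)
The plan is to deduce all three properties (finite generation, Noetherianity, and the domain property) for $U_{t,c}(h,W) = eH_{t,c}(h,W)e$ from the corresponding properties of its associated graded ring, using a PBW-type filtration on $H_{t,c}(h,W)$.

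First, I would equip $H_{t,c}(h,W)$ with the filtration in which elements of $V = h \oplus h^*$ have degree one and elements of $W$ have degree zero. The PBW theorem for symplectic reflection algebras, established by Etingof--Ginzburg in characteristic zero and extended to prime characteristic precisely under the hypothesis $2|\Gamma| \in \k^\times$, gives a canonical isomorphism $\gr H_{t,c}(h,W) \simeq \mathcal{O}(V) \# W$. This is where the hypothesis really does work: the condition $2|W| \in \k^\times$ is the classical necessary and sufficient condition for the Jacobi-type identity on the commutation relations to force a PBW basis.

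Next, since $|W|$ is invertible in $\k$, the averaging element $e = \frac{1}{|W|}\sum_{w \in W} w$ is an honest idempotent of filtration degree zero. Hence the spherical subalgebra $eH_{t,c}e$ inherits a filtration, and multiplication by $e$ commutes with passing to the associated graded, giving
\[ \gr(eH_{t,c}(h,W)e) \;\simeq\; e\,(\gr H_{t,c}(h,W))\,e \;\simeq\; e(\mathcal{O}(V) \# W)e \;\simeq\; \mathcal{O}(V)^W. \]
Now $\mathcal{O}(V)$ is a polynomial ring, so it is a finitely generated Noetherian domain; since $|W| \in \k^\times$, the Reynolds operator lets Hilbert's theorem on invariants apply, and $\mathcal{O}(V)^W$ is a finitely generated $\k$-algebra. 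As a subring of the domain $\mathcal{O}(V)$ it is a domain, and by the Hilbert basis theorem it is Noetherian.

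Finally, I would invoke the standard filtered-ring lifting results collected in \cite{McConnell}: if a positively filtered ring has associated graded algebra that is a finitely generated Noetherian domain, then the ring itself is a finitely generated Noetherian domain (finite generation lifts by choosing lifts of a finite generating set for the graded algebra; Noetherianity and the domain property lift because the filtration is exhaustive and separated). Applied to $U_{t,c}(h,W)$ with $\gr = \mathcal{O}(V)^W$, this yields the proposition. The main obstacle in this plan is Step 1, the PBW theorem in prime characteristic; once PBW is in place, Steps 2--4 are formal filtered-ring manipulations, and this is exactly the division of labor carried out in \cite{BC}.
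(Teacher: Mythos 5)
Your proof is correct and is essentially the same argument the paper relies on: the paper merely cites \cite[Theorem 3.1]{BC}, and the proof there proceeds exactly as you describe --- the PBW theorem for symplectic reflection algebras in odd characteristic prime to $|W|$, the identification $\operatorname{gr}\,\bigl(eH_{t,c}(h,W)e\bigr)\simeq e\,(\mathcal{O}(V)\#W)\,e\simeq\mathcal{O}(V)^W$, and the standard lifting of finite generation, the Noetherian property and the domain property from the associated graded ring. The only quibble is your aside that $2|W|\in\k^\times$ is \emph{necessary} for the PBW property; it is the sufficient hypothesis under which the deformation-theoretic argument works, but this does not affect the validity of your proof.
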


\begin{proof}
\textbf{Proof of Theorem} \ref{new-7}

By \cite[Theorem 4.5 and Remark 4.6]{BC}, there is a $W$ invariant element $\delta \in \k[h]$ such that $H_{t,c}(h, W)\delta^{-1} \simeq \mathcal{D}(h^{reg})*W$.  $e \mathcal{D}_c(h_{reg})*We \simeq \mathcal{D}_c(h_{reg})^W$, hence $U_{t,c}(h,W)\delta^{-1} \simeq \mathcal{D}_c(h_{reg})^W$. $W$ acts freely  in $h_{req}$; hence may then use Theorem \ref{free}. We have $\mathcal{D}_c(h_{reg})^W \simeq  \mathcal{D}_c(h_{reg}
/W)$.

By assumption, $W$ is a pseudo-reflection group and $|W|$ is coprime to $char \, \k$. Chevalley-Shephard-Todd Theorem also hold in this situation (Theorem \ref{C-S-T}). Hence $h_{reg}/W$ is an affine rational variety and so by Theorem \ref{new-3}, the skew field of fractions indeed is $F_n(\k)$, $n=dim \, h$.
\end{proof}

\subsection{Contangent-bundle}


\begin{theorem}\label{quasi-classical1}
Let $X$ be an smooth affine variety, $G$ a finite group that acts freely on it. Then $T*(X/G)$ is isomorphic, as a Poisson variety, to $(T*X)/G$.    
\end{theorem}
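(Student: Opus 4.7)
The plan is to imitate the proof of Theorem \ref{free}, replacing its noncommutative PBW step with the elementary commutative analogue. Since $G$ acts freely on the smooth affine variety $X$, the projection $\pi : X \to X/G$ is étale (as already invoked in the proof of Theorem \ref{free}), so $d\pi : T_X \to \pi^* T_{X/G}$ is an isomorphism of $\mathcal{O}(X)$-modules. Passing to symmetric algebras over $\mathcal{O}(X)$ yields
\[
\mathcal{O}(T^*X) \;\cong\; \mathcal{O}(X) \otimes_{\mathcal{O}(X/G)} \mathcal{O}(T^*(X/G)),
\]
i.e., $T^*X \cong X \times_{X/G} T^*(X/G)$ as schemes over $X$, with $G$ acting only on the first factor.

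I would then take $G$-invariants on both sides. Because $\pi$ is a $G$-torsor, $\mathcal{O}(X/G) \to \mathcal{O}(X)$ is faithfully flat with $\mathcal{O}(X)^G = \mathcal{O}(X/G)$, and Galois descent gives $(\mathcal{O}(X) \otimes_{\mathcal{O}(X/G)} M)^G \cong M$ for any $\mathcal{O}(X/G)$-module $M$ regarded as $G$-trivial. Applied to $M = \mathcal{O}(T^*(X/G))$, this produces
\[
\mathcal{O}(T^*X)^G \;\cong\; \mathcal{O}(T^*(X/G)),
\]
so $(T^*X)/G \cong T^*(X/G)$ at the level of varieties.

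What remains is to match Poisson structures. My preferred route is direct: the étale projection $T^*X \to T^*(X/G)$ coming from the fiber-product description pulls the Liouville $1$-form, hence the canonical symplectic form, of $T^*(X/G)$ back to that of $T^*X$, because étale morphisms are local isomorphisms and the cotangent construction is natural for them. Consequently the Poisson bracket on $\mathcal{O}(T^*X)^G$ coincides with the canonical bracket on $\mathcal{O}(T^*(X/G))$ under the above isomorphism. An alternative would be to combine Theorem \ref{free} with Proposition \ref{filtered}, passing the filtered isomorphism $\mathcal{D}_c(X)^G \simeq \mathcal{D}_c(X/G)$ to associated graded; here the main obstacle is showing that $\mathrm{gr}\,(\mathcal{D}_c(X)^G) = (\mathrm{gr}\,\mathcal{D}_c(X))^G = \mathcal{O}(T^*X)^G$, which is not automatic in prime characteristic when $|G|$ is not invertible in $\k$, but which is again rescued by the flatness afforded by the étaleness of $\pi$.
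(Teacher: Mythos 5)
Your proposal is correct and follows essentially the same route as the paper, whose proof simply says to repeat the argument of Theorem \ref{free} at the level of cotangent bundles: étaleness of $\pi: X \to X/G$, the isomorphism $d\pi: T_X \simeq \pi^* T_{X/G}$, passage to symmetric algebras, and taking $G$-invariants. Your additional details --- the Galois-descent justification of $(\mathcal{O}(X)\otimes_{\mathcal{O}(X/G)} M)^G \simeq M$ (valid since $\pi$ is a $G$-torsor, with no need for $|G|$ invertible) and the matching of Poisson brackets via naturality of the Liouville form under étale maps --- are exactly the steps the paper leaves implicit.
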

\begin{proof}
It follows the same steps as Theorem \ref{free}, but simpler, since we are now at the level of contagente bundles  
\end{proof}

\begin{remark}
In characteristic $0$ the above result was shown in \cite[Theorem 3.4]{Schwarz}.    
\end{remark}

\begin{proposition}\label{quasi-classical2}

Let $X$ and $Y$ be birationally equivalent smooth affine varieties. Then $\k(T^*X)$ and $\k(T^* Y)$ are isomorphic as Poisson fields. 
\end{proposition}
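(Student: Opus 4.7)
The plan is to mirror, at the quasi-classical level, the argument used for Proposition \ref{localization}: pass to a common affine open where the birational equivalence is an honest isomorphism, and exploit the fact that the Poisson function field is insensitive to the choice of dense affine open model.

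First, I would extract from the hypothesis of birational equivalence a pair of isomorphic affine opens. By definition, there are dense opens $U' \subset X$, $V' \subset Y$ and an isomorphism $\phi : U' \xrightarrow{\sim} V'$. Choose any affine open $U_0 \subset U'$ and then an affine open $V_0 \subset V'$ contained in $\phi(U_0)$. The intersection $U := U_0 \cap \phi^{-1}(V_0)$ is an intersection of two affine opens inside the separated variety $U'$, hence is itself affine, and $\phi$ restricts to an isomorphism $U \xrightarrow{\sim} V_0 =: V$. Both $U$ and $V$ are then affine open subvarieties of $X$ and $Y$ respectively, isomorphic as smooth affine varieties.

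Next I would show that for any smooth affine $Z$ and any dense affine open $W \subset Z$, the function fields $\k(T^*Z)$ and $\k(T^*W)$ coincide as Poisson fields. Reducing to the case $W = Z_c$ of a basic open, the key observation is that the formation of the cotangent bundle is compatible with open immersions: $T^*(Z_c)$ equals the preimage of $Z_c$ under the projection $\pi : T^*Z \to Z$, which is the basic open of $T^*Z$ cut out by $c$ viewed as an element of $\mathcal{O}(T^*Z)$. Hence $\mathcal{O}(T^*Z_c) = \mathcal{O}(T^*Z)[c^{-1}]$, and the Poisson bracket on $\mathcal{O}(T^*Z)$ extends uniquely to this localization, since $\{c^{-1}, \cdot\} = -c^{-2}\{c, \cdot\}$ together with the Leibniz rule forces the extension. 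Since any two dense affine opens of the irreducible variety $Z$ admit a common basic open refinement, the Poisson identification follows by chaining such comparisons.

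Finally, combining the two steps: since $U \simeq V$ as smooth affine varieties, the functoriality of the cotangent bundle lifts this isomorphism to a Poisson isomorphism $T^*U \simeq T^*V$, and therefore
\[\k(T^*X) \simeq \k(T^*U) \simeq \k(T^*V) \simeq \k(T^*Y)\]
as Poisson fields. The main potential obstacle, such as it is, lies in verifying that the Poisson bracket genuinely extends uniquely to each localization and is preserved at every identification in the chain; this is routine, and is precisely the classical-level analogue of the ``$\mathcal{D}_c$ sheafifies'' input invoked in the proof of Proposition \ref{localization}.
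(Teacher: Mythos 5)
Your argument is correct and is essentially the paper's own: the paper proves this proposition simply by citing \cite[Proposition 3.12]{Schwarz} and observing that the argument there is characteristic-free, and that argument is precisely your localization scheme --- replace $X$ and $Y$ by isomorphic dense affine opens and use that $\mathcal{O}(T^*Z_c)=\mathcal{O}(T^*Z)[c^{-1}]$ compatibly with the canonical Poisson brackets, which pass uniquely to localizations and fraction fields. Your writeup just supplies explicitly the details the paper outsources to that citation.
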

\begin{proof}
The proof of \cite[Proposition 3.12]{Schwarz} works on any characteristic.
\end{proof}

\textbf{proof of Theorem}  \ref{new-6}

Let $U$ be an affine open subset of $X$ where $G$ acts freely. As $\k(X)=\k(U)$, $(1) \, Frac \, \mathcal{O}(T^*X)^G \simeq Frac \, \mathcal{O}(T^*U)^G$, as Poisson fields, by Proposition \ref{quasi-classical2}. By Theorem \ref{quasi-classical1}, $(2) \, Frac \, \mathcal{O}(T^*U)^G \simeq Frac \, \mathcal{O}(T^*(U/G))$ as Poisson fields. As $X/G$ is birational to $Y$, $U/G$ is also, and hence $(3) \, Frac \, \mathcal{O}(T^*(U/G)) \simeq \mathcal{O}(T^*Y)$ as Poisson fields, by Proposition \ref{quasi-classical2} again. Combining (1), (2), (3), we obtain our result.

Now we will use this theorem for Poisson-Noether's Problem. It is convenient to recall Chevalley-Shephard-Todd-(Serre) Theorem:

\begin{theorem}\label{C-S-T}
    Let $V$ be a finite dimensional vector space, $G<GL(V)$ a finite group whose order is not divisible by $char \, \k$. Then $S(V^*)^G$ is again a polynomial algebra if and only if $G$ is a pseudo-reflection group.
\end{theorem}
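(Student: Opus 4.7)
\textbf{Proof proposal for Theorem \ref{C-S-T}.} The plan is to prove the two directions separately, treating the ``if'' direction (Chevalley, 1955) first since it is more constructive, and the ``only if'' direction (Shephard--Todd / Serre) afterwards via a local-to-global analysis. Throughout, I will write $S = S(V^*)$ for the symmetric algebra, with its natural grading, and use the Reynolds operator $R = \frac{1}{|G|}\sum_{g \in G} g$, which is well defined because $|G|$ is coprime to $\operatorname{char} \k$.

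For the ``if'' direction, assume $G$ is generated by pseudo-reflections. The first step is to show that $S$ is a free module over $S^G$; this follows from the fact that $S^G$ is a Cohen--Macaulay domain (direct summand of a polynomial ring via $R$, hence normal and Cohen--Macaulay) together with the fact that $S$ is finite over $S^G$ (Noether's finiteness theorem, using again $|G|\in \k^{\times}$). One then argues that $S$ is free of rank $|G|$ by showing that it is projective: using that every pseudo-reflection $s$ gives a relation $f - s(f) \in \ell_s \cdot S$ where $\ell_s$ is a linear form cutting the reflection hyperplane, one can mimic Chevalley's original argument by induction on the number of generating pseudo-reflections to verify freeness. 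Once freeness is established, compare Poincar\'e series: $\mathrm{Hilb}(S,t) = \mathrm{Hilb}(S^G,t)\cdot \sum_{i}t^{d_i}$ for the degrees $d_i$ of a homogeneous basis, and the Molien formula $\mathrm{Hilb}(S^G,t) = \frac{1}{|G|}\sum_{g}\det(1-tg\big|_{V^*})^{-1}$ forces $\mathrm{Hilb}(S^G,t) = \prod_{i=1}^{n}(1-t^{e_i})^{-1}$ for suitable degrees $e_i$; a minimal generating set in those degrees then has exactly $n = \dim V$ elements and is algebraically independent for dimension reasons, so $S^G$ is polynomial.

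For the ``only if'' direction, assume $S^G$ is a polynomial ring, hence regular. The plan is local: for $v \in V$ with stabilizer $G_v$, let $\mathfrak{m}_v \subset S$ be the maximal ideal of $v$ and consider the induced action of $G_v$ on the cotangent space $\mathfrak{m}_v/\mathfrak{m}_v^{2} \cong V^*$. Because $|G_v|$ divides $|G|$ and is coprime to the characteristic, a standard completion argument (passing to $\widehat{S}_v \cong \k[\![V^*]\!]$ and linearizing the $G_v$-action) reduces the local problem to a linear one, namely: the stabilizer $G_v$ of a point in a smooth quotient must itself satisfy $S^{G_v}$ regular locally; one then shows $G_v$ is generated by pseudo-reflections of $V$ fixing $v$. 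Globally, the pseudo-reflections in $G$ generate a normal subgroup $N$, and I would argue that the quotient $G/N$ acts freely on an open dense subset of $V/N$. Since $V/N$ is already smooth (by the ``if'' direction applied to $N$) and $(V/N)/(G/N) = V/G$ is also smooth, the free action of $G/N$ on a dense open set would have to extend to a free action everywhere if the quotient is smooth, which (by the purity of the branch locus and a ramification computation using $|G|\in\k^{\times}$) forces $G/N = 1$.

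The main obstacle, I expect, is the second direction, and specifically the step where one has to pass from ``$G_v$ acts on the cotangent space with smooth invariant ring'' to ``$G_v$ is generated by pseudo-reflections.'' One cannot invoke the statement being proved without circularity, so the cleanest route is an induction on $|G|$: the local analysis reduces to strictly smaller groups, and the base case ($G_v$ acting irreducibly on $V^*$) is handled directly by a Jacobian/discriminant computation showing that the branch divisor of $V \to V/G_v$ must be a union of hyperplanes, each fixed pointwise by an element of $G_v$. Combined with the tameness hypothesis $|G|\in\k^\times$, this purity-of-branch-locus argument closes the induction and completes the proof.
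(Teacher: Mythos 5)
First, note that the paper does not actually prove this statement: its ``proof'' is a citation to Bourbaki, so there is no internal argument for you to match. Judged on its own merits, your sketch has a genuine gap, and it sits exactly where the content of the ``if'' direction lies. You claim that $S=S(V^*)$ is free over $S^G$ because $S^G$ is Cohen--Macaulay (as a direct summand of $S$) and $S$ is finite over $S^G$. That inference is false: miracle flatness requires the \emph{base} to be regular and the top to be Cohen--Macaulay, not the other way around. Indeed, by faithfully flat descent of regularity, freeness of $S$ over $S^G$ in the nonmodular case is \emph{equivalent} to the conclusion you are trying to prove ($S^G$ regular, hence polynomial since it is graded connected), so the step is circular; a concrete counterexample to your inference is $C_2$ acting by $-1$ on $\k^2$, where $S^G=\k[x^2,xy,y^2]$ is Cohen--Macaulay and $S$ is finite over it but not free. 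The fallback phrase ``mimic Chevalley's original argument by induction on the number of generating pseudo-reflections to verify freeness'' is not an argument and also misdescribes Chevalley's proof: that proof never establishes freeness first. It takes a minimal homogeneous generating set $f_1,\dots,f_m$ of the Hilbert ideal $S\cdot S^G_+$, proves algebraic independence using the key relation $f-s(f)\in \ell_s S$ for pseudo-reflections $s$ together with an induction on degree, concludes $m=n$, and then shows $S^G=\k[f_1,\dots,f_n]$ by another induction on degree using the Reynolds operator. Your Molien-series step is likewise backwards: Molien's formula does not ``force'' $\mathrm{Hilb}(S^G,t)=\prod_i(1-t^{e_i})^{-1}$; that product form is a consequence of polynomiality, not an input one can extract from the average of $\det(1-tg)^{-1}$.

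The ``only if'' half is in better shape, but only the second, global part of it. The route via the normal subgroup $N$ generated by the pseudo-reflections is the standard one: one checks that all codimension-one ramification of $V\to V/G$ is accounted for by pseudo-reflections (this uses tameness, $|G|\in\k^\times$), so $V/N\to V/G$ is unramified in codimension one; since $V/G$ is smooth by hypothesis, Zariski--Nagata purity makes this map \'etale, i.e.\ $G/N$ acts freely on $V/N$; and then, rather than invoking triviality of prime-to-$p$ \'etale covers of affine space, you can finish more cheaply by observing that the image of the origin is a $G/N$-fixed point, so a free action forces $G/N=1$. By contrast, the preceding local analysis via stabilizers $G_v$, completion, and induction on $|G|$ is not needed for this argument and, as you yourself note, is where circularity threatens; the ``Jacobian/discriminant computation'' offered as the base case is too vague to carry the load. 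I would drop that paragraph entirely, keep the purity argument, and replace the ``if'' direction by Chevalley's actual induction-on-degree proof (or simply cite Bourbaki, as the paper does).
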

\begin{proof}
    \cite{Bourbaki}.
\end{proof}

If the characteristic of the field divides the order of the group, Serre (\cite{Serre}) has shown that for $S(V^*)^G$ to be a polynomial algebra, a necessary condition is that $G$ is a pseudo-reflection group; but it is not sufficient (see a counter-example in \cite[19-2]{Kane})

In characteristic 0, the pseudo-reflection groups have long been classified using the classification of  Shephard and Todd for complex reflection groups and Clark-Ewing Theorem (see, e.g., \cite[Chapter 15]{Kane}).

More recently the irreducible pseudo-reflection groups were classified over any characteristic,
by Kantor, Wagner, Zaleskii and Serezkin, and those for which the invariants of the polynomial algebra ara again polynomial by Kemper and Malle (see both aspects of the classification in \cite{KM}).

Let $\k$ be an algebraically closed field of prime characteristic, and $G$ an irreducible pseudo-reflection group. We will call the group K-M if it is in Kemper and Maller list.

\begin{theorem}\label{prime-Baudry}
    Let $G < GL(h)$ be a K-M group. Then if we make $G$ act diagonally on $h \oplus h^*$, with its canonical symplectic form, $(h \oplus h^*)/G$ is Poisson rational.
\end{theorem}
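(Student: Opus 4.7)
The plan is to realize $h \oplus h^*$ as the cotangent bundle $T^*h$ and then feed the resulting quotient into Theorem \ref{new-6}. Concretely, the canonical pairing identifies $h \oplus h^*$ with $T^*h$ symplectically, and the diagonal action of $G$ on $h \oplus h^*$ (via the given representation on $h$ and the contragredient on $h^*$) is precisely the cotangent lift of the $G$-action on $h$. Hence $(h \oplus h^*)/G \simeq (T^*h)/G$ as Poisson varieties, and the Poisson function field on the left is $\mathrm{Frac}\,\mathcal{O}(T^*h)^G$.

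Next I would use the hypothesis that $G$ is K-M. By definition this means $G$ appears in the Kemper--Malle list, so the Chevalley--Shephard--Todd--Serre mechanism applies and $S(h^*)^G$ is a polynomial ring in $n=\dim h$ generators. Equivalently, $h/G \simeq \mathbb{A}^n$ as varieties, which in particular makes $h/G$ birationally equivalent to the smooth affine variety $Y=\mathbb{A}^n$. At this point the hypotheses of Theorem \ref{new-6} are verified with $X=h$ (smooth affine, and $G$ acts by automorphisms) and $Y=\mathbb{A}^n$.

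Applying Theorem \ref{new-6} then yields an isomorphism of Poisson fields
\[
\mathrm{Frac}\,\mathcal{O}(T^*h)^G \;\simeq\; \mathrm{Frac}\,\mathcal{O}(T^*\mathbb{A}^n).
\]
The right-hand side is the fraction field of $\mathcal{O}(T^*\mathbb{A}^n) \simeq \k[x_1,\dots,x_n,y_1,\dots,y_n]$ with its standard symplectic Poisson bracket, which is precisely $\mathcal{P}_n(\k)$. Combined with the identification from the first paragraph, this gives $\k((h\oplus h^*)/G) \simeq \mathcal{P}_n(\k)$ as Poisson fields, proving Poisson rationality.

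The only delicate point I foresee is bookkeeping the Poisson structures: one needs to confirm that the Poisson structure on $\mathrm{Frac}\,\mathcal{O}(T^*h)^G$ coming from Theorem \ref{new-6} (via the symplectic form on $T^*h$) matches the one on $\k(h\oplus h^*)^G$ coming from $\omega$. This is immediate since the standard symplectic form on $T^*h$ written in the coordinates $(v,\xi)\in h\oplus h^*$ is exactly $\omega((u,f),(x,g)) = g(u)-f(x)$, and the $G$-action is symplectic. Everything else is a direct application of already-established results in the paper, so I do not expect any further obstacle.
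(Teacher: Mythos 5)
Your argument is essentially identical to the paper's proof: both take $X=h$, use the K--M hypothesis to identify $h/G$ with affine space (the paper phrases this as $h/G$ being isomorphic to $Y=h$), identify $T^*h$ with $h\oplus h^*$, and apply Theorem \ref{new-6}. The only quibble is notational: in the paper's convention the Poisson function field is indexed by the dimension of the symplectic space, so the target is $\mathcal{P}_{2n}(\k)$ with $n=\dim h$, which is exactly the fraction field of $\k[x_1,\dots,x_n,y_1,\dots,y_n]$ you describe.
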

\begin{proof}
Like our work in the $\mathbb{C}$ case (\cite{Schwarz}) we may use Theorem \ref{new-6}. In our situation, $X=h$. $h/G$ is clearly birationally equivalent to $Y=h$ --- they are in fact isomorphic. Finally, recall that $T^*h=h \oplus h^*$. Apllying Theorem \ref{new-6} we obtain our desired result.   
\end{proof}
\section*{Appendix}

In the paper \cite{EG}, Etingof and Ginzburg conjectured (see Proposition 17.6* of their paper) an analogue of the Gelfand-Kirillov Conjecture for all spherical subalgebras (at $t=1$) of symplectic reflection algebras. Namely, Let $A_1(V)$ be the Weyl algebra of the symplectic vector space $(V, \omega)$ at $t=1$, and $\Gamma$ the symplectic reflection group. Let $n=dim \, V$. Then the skew field of fractions should be $F_n(\mathbb{C})^\Gamma$. When $\Gamma$ is a complex reflection group, the conjecture is true by the Dunkl embedding, and the skew field of fractions is in fact isomorphic to $F_n(\mathbb{C})$ (\cite{EFOS}). The other non-expceptional family of symplectic reflection groups on Cohen's classification \cite{Cohen} are the so called wreath product type $G \wr S_n$, $G$ a finite subgroup of $SL_2(\mathbb{C})$, which are classified \cite{Springer}. If $\Gamma=G \wr S_n$, it is folkore that $F_n(\mathbb{C})^\Gamma \simeq F_n(\mathbb{C})$. We offer a proof of this fact. First notice that if $G$ is a group of automorphisms of a ring $R$ and $H$ a normal subgroup of $G$, $R^G=(R^H)^{G/H}$. So when $\Gamma$ is of wreath product type, we have that $F_n(\mathbb{C})^\Gamma=((F_1(\mathbb{C})^G)^{\otimes n})^{S_n}$. By a result from \cite{AlevX}, $F_1(\mathbb{C})^G \simeq F_1(\mathbb{C})$. Hence $F_n(\mathbb{C})^\Gamma \simeq F_n(\mathbb{C})^{S_n} \simeq F_n(\mathbb{C})$. In the last isomorphism we used \cite[Theorem 4.1]{FMO}.

\section*{Acknowledgments}

The author would like the expresses his grattitude to V. Futorny, whom he had a lot of conversations about this paper, and R. Bezrukavnikov, with whom he discussed the proof of the essential Theorem \ref{free}. Finally, the author would like to thanks the love of his life, P. Marcondes, for their support in everything.

\end{document}